\newtheorem{thm}{Theorem}
\newtheorem{prop}[thm]{Proposition}
\newtheorem{lemma}[thm]{Lemma}
\newtheorem{corr}[thm]{Corollary}
\theoremstyle{definition}
\newtheorem{defi}[thm]{Definition}
\theoremstyle{remark}
\newtheorem{rem}[thm]{Remark}
\newtheorem{term}[thm]{Terminology}
\newtheorem{ex}[thm]{Example}
\newcommand{\cd}[2][]{\vcenter{\hbox{\xymatrix#1{#2}}}}
\newcommand{\J}{\mathbb{J}}
\newcommand{\Lcoalg}{\mathsf{L}\text{-}\mathbf{Coalg}}
\newcommand{\LCoalg}{\mathsf{L}\text{-}\mathbb{C}\mathbf{oalg}}
\newcommand{\Ralg}{\mathsf{R}\text{-}\mathbf{Alg}} 
\newcommand{\RAlg}{\mathsf{R}\text{-}\mathbb{A}\mathbf{lg}}
\newcommand{\dpf}{{\pitchfork\mskip-9.7mu\pitchfork}}
\newcommand{\Talg}{\mathsf{T}\text{-}\mathbf{Alg}}
\newcommand{\Tealg}{T\text{-}\mathbf{Alg}}
\newcommand{\Toalg}{T_1\text{-}\mathbf{Alg}}
\newcommand{\Ttwalg}{T_2\text{-}\mathbf{Alg}}
\newcommand{\Tsalg}{T_1\text{-}\mathbf{Alg}_\infty^s}
\newcommand{\Ord}{Ord}
\DeclareMathOperator{\Sets}{\mathbf{Sets}}
\DeclareMathOperator{\id}{id}
\DeclareMathOperator{\Set}{Set}
\DeclareMathOperator{\colim}{colim}
\DeclareMathOperator{\Cat}{\mathbf{Cat}}
\DeclareMathOperator{\op}{\mathrm{op}}
\DeclareMathOperator{\Sq}{\mathbb{S}\mathbf{q}}
\DeclareMathOperator{\sSet}{\mathbf{sSet}}
\DeclareMathOperator{\2b}{\mathbf{2}}
\newcommand{\dcat}[1]{{\mathbb #1}}
\newcommand\justeq[1]{\mathrel{\overset{\makebox[0pt]{\mbox{\normalfont\tiny\sffamily #1}}}{=}}}
\begin{document}

\title[Small object argument]{A constructive approach to the double-categorical small object argument}
\author{Benno van den Berg}
\address{Institute for Logic, Language and Computation, Universiteit van Amsterdam, P.O. Box 94242, NL, 1090 GE Amsterdam, The Netherlands}
\email{b.vandenberg3@uva.nl}

\author{John Bourke}
\address{Department of Mathematics and Statistics, Masaryk University, Kotl\'a\v rsk\'a 2, Brno 61137, Czech Republic}
\email{bourkej@math.muni.cz}

\author{Paul Seip}
\address{Institute for Application-Oriented Knowledge Processing, Johannes Kepler University Linz, Altenberger Stra{ß}e 69, 4040 Linz, Austria}
\email{pseip@faw.jku.at}

\subjclass[2020]{Primary: 18A32, 18N45, 18N50}
\date{\today}

\begin{abstract}
Bourke and Garner described how to cofibrantly generate algebraic weak factorisation systems by a small double category of morphisms.  However they did not give an explicit construction of the resulting factorisations as in the classical small object argument.  In this paper we give such an explicit construction, as the colimit of a chain, which makes the result applicable in constructive settings; in particular,
our methods provide a constructive proof that the effective Kan fibrations introduced by Van den Berg and Faber appear as the right class of an algebraic weak factorisation system.
\end{abstract}

\maketitle 

\section{Introduction}

\subsection{Background and motivation}

The small object argument was  introduced by Quillen in \cite{Quillen1967} as a tool for the construction of cofibrantly generated model categories.  More generally, it is the key tool for constructing weak factorisation systems on a category $\mathcal C$, cofibrantly generated by a set of morphisms $\mathcal{J}$.  The construction itself is quite simple.  It starts by taking an arrow $f$, and factors it $f = Tf \circ Kf \colon X \to Sf \to Y$ where the object $Sf$ is constructed as a certain pushout.  By construction, the new morphism $Tf$ has liftings for those squares factoring through $f$
\begin{equation}
\xymatrix{
    A \ar[d]_{j \in \mathcal{J}} \ar[r]^{u} & X \ar[d]^(.6){f} \ar[r]^{Kf} & Sf \ar[d]^{Tf} \\
    B \ar@{-->}[urr]^<<<<<<<<<{\exists } \ar[r]_{v} & Y \ar[r]_{1} & Y
    }
\end{equation}
but not got all of the required squares.   In order to rectify this problem, one repeats the process with input $Tf$ transfinitely, obtaining a chain in the category of arrows.

\begin{equation*}
\xymatrix{
    f \ar[rr]^{\eta_f = (Kf,1)} && Tf \ar[rr]^{\eta_{Tf}} && T^2f \ldots \ar[rr] && Rf \ldots
    }
\end{equation*}

Under mild size assumptions on $\mathcal{J}$, eventually the transfinite composite $Rf$ obtains all fillers, and one obtains the desired factorisation $f = Rf \circ Lf \colon X \to Ef \to Y$ for the weak factorisation system.

In his groundbreaking paper \cite{Garner2011Understanding}, Garner observed that the small object argument has some deficiencies, because it adds redundant information.  In particular, the object $T^2f$ in the above chain has two fillers against those squares factoring through $Tf$.  This redundancy can be factored out by replacing the second term $T^2 f$ in the above sequence by the coequaliser
\[
        \xymatrix{
        Tf \ar@<1ex>[rr]^{T\eta_{f}} 
        \ar@<-1ex>[rr]_{\eta_{Tf}} & & T^{2}f \ar[r]^{} & T_2 f \\
        }
 \]
and continuing in a similar fashion to obtain a new sequence

\begin{equation*}
\xymatrix{
    f \ar[rr]^{\eta_f } && T f \ar[rr]^{} && T_2 f \ldots \ar[rr] && Rf \ldots
    }
\end{equation*}

This new sequence is, in fact, the free algebra sequence for the pointed endofunctor $(T, \eta)$.  Furthermore, under mild conditions on $\mathcal{J}$ it converges, producing the free $T$-algebra on $f$.  This is a morphism $Rf$ equipped with a \emph{canonical filler} against each square with left leg in $\mathcal{J}$, as depicted below.

\begin{equation*}
\xymatrix{
    A \ar[d]_{j \in \mathcal{J}} \ar[rr]^{u} && Ef \ar[d]^{Rf} \\
    B \ar@{-->}[urr]^{\phi_j(u,v) } \ar[rr]_{v} && Y 
    }
\end{equation*}

Using this construction, one obtains a factorisation $f = Rf \circ Lf$ as before.  However, the result is not merely a weak factorisation system, but a so-called \emph{algebraic weak factorisation system} (awfs) \cite{Grandis2006}.  Algebraic weak factorisation systems refine weak factorisation systems by allowing morphisms equipped with structure (a choice of liftings) as opposed to morphisms satisfying the lifting property.  

In fact, Garner's small object argument allows $\mathcal J$ to be a small category of morphisms --- that is, a category equipped with a functor $\mathcal{J} \to \mathcal{C}^{\2b}$ to the category of arrows in $\mathcal C$.  Then the resulting $T$-algebras are morphisms $f$ equipped with a lifting function $\phi$ satisfying the \emph{horizontal} compatibility 

 \[
\cd{A \ar[d]_{i} \ar[r]^-{s}  & C \ar[d]_{j} \ar[r]^u & X  \ar[d]^{f} \\ 
B \ar[r]_-{t}  & D \ar@{.>}[ur]|{\phi_{j}(u,v)} \ar[r]_v & Y} 
  \quad = \quad 
  \cd{A \ar[d]_{i} \ar[rr]^-{u.s}  && X \ar[d]^{f} \\ 
  B \ar@{.>}[urr]|{\phi_{i}(u.s,v.t)} \ar[rr]_-{v.t}  && Y}
\]
for $(s,t) \colon i \to j$ a morphism of  $\mathcal{J}$.
  
Later, Bourke and Garner \cite{Bourke2016Accessible} further generalised this, allowing now $\mathcal{J}$ to be a small double category of morphisms --- the idea here being that one can ask that the liftings satisfy \emph{vertical} compatibilities in addition to horizontal ones.  This greater generality allows for a host of new examples and was shown to be best possible in the sense that each accessible awfs on a locally presentable category is cofibrantly generated by a small double category of morphisms.

Unlike in the earlier paper of Garner however, Bourke and Garner did not give an explicit construction of the awfs generated by a small double category and also assumed the base category to be locally presentable.\begin{footnote}{They established its existence as a coequaliser of accessible monads (themselves free on accessible pointed endofunctors) such being known to exist by earlier results of Kelly \cite{Kelly1980A-unified}.}\end{footnote}   Our goal in the present paper is to give such an explicit construction, avoiding the need for local presentability.   In fact, we will see that it is a natural enhancement of the small object argument of Garner described above, obtained by replacing the coequaliser at each successor stage by a joint coequaliser (in order to impose the vertical compatibility conditions).  The main result is Theorem \ref{thm:main} and a comparison with the small object argument of Garner described above is given in Remark \ref{rem:explicit}.

Our main motivation for giving an explicit construction for awfs cofibrantly generated by small double categories comes from the work of Van den Berg and coauthors \cite{BergFaber2022,BergGeerligs25,Tanaka25} on simplicial homotopy theory in the constructive setting.  This involves the notion of an effective Kan fibration, which can be described using double categorical lifting properties, but in order to obtain a constructive algebraic model structure for simplicial sets, they require giving a constructive small object argument.  The constructive small object argument presented in Theorem \ref{thm:constructivemain} here solves that problem.

\subsection{Related work}

Our results build on the thesis \cite{Seip2024} of the third-named author Seip, under the supervision of van den Berg.  The present paper improves the results of \cite{Seip2024} by giving an explicit construction for the small object argument and more general hypotheses.

Earlier work analysing the small object argument from a constructive point of view has exclusively focused on cofibrant generation by a small category of morphisms. In particular, In \cite{swan18} Andrew Swan gave a constructive small object argument for small categories of morphisms using a weak choice axiom called WISC. In \cite{cavallosattler25} (see also \cite{sattler25}) the authors also go over this argument in an attempt to get a better understanding of the left class of a cofibrantly generated awfs; for this they also focus on the setting of categories of morphisms and rely more on the work of \cite{Kelly1980A-unified} than \cite{Koubek1979Categorical}, as we will do. In addition, Garner's account of his small object argument for categories of morphisms has been formalised in the Unimath library of the Rocq proof assistant by Hilhorst and North \cite{hilhorstnorth24}; their proof also only uses constructive principles and focuses on the finitary case. 

\subsection{Contents}

Let us now give an outline of the paper.  In Section \ref{sect:prelim}, we recall a small amount of background about cofibrantly generated algebraic weak factorisation systems and free algebras on pointed endofunctors.

Section \ref{sect:categorical} is devoted to obtaining a better understanding of the pointed endofunctor $T$ arising in the first stage of Garner's small object argument.  In particular we establish a new universal property of $T$ with respect to $1$-step lifting operations.  We also revisit Garner's small object argument for the awfs generated by a small category of morphisms.  

In Section \ref{sect:double} we prove our main results, Theorem \ref{thm:main} and Theorem \ref{thm:constructivemain}, on cofibrant generation by small double categories.



\section{Preliminaries}\label{sect:prelim}

In this section, we review some background material on algebraic weak factorisation systems before recalling the construction of free algebras on pointed endofunctors, which is crucial in the construction of cofibrantly generated awfs.

\subsection{Cofibrantly generated algebraic weak factorisation systems}

Algebraic weak factorisation systems (awfs) were introduced by Grandis and Tholen \cite{Grandis2006}.  The definition of awfs was refined by Garner \cite{Garner2011Understanding} and their basic theory was further developed in \cite{Riehl2011Algebraic, Bourke2016Accessible, Bourke2023Orthogonal}.  In this section, we quickly recall awfs, double-categorical liftings and cofibrant generation of awfs, primarily following \cite{Bourke2016Accessible} and \cite{Bourke2023Orthogonal}.

To begin with, an awfs $(L,R)$ on $\mathcal C$ consists of a comonad $L$ and monad $R$ on the the category of arrows $\mathcal{C}^{\2b}$ satisfying various compatibilities.  (For the full definition, not needed here, see for instance \cite{Garner2011Understanding}.)  The categories $ \Lcoalg \to \mathcal{C}^{\2b}$ and $ \Ralg \to \mathcal{C}^{\2b}$ of coalgebras and algebras, equipped with their forgetful functors to the category of arrows, are thought of as the categories of left and right maps of the awfs.  Their objects are pairs $(f,\phi)$ where $f \colon A \to B$ is a morphism of $\mathcal C$ and $\phi$ the additional (co)algebra structure, whilst morphisms $(u,v) \colon (f,\phi) \to (g,\theta)$ in these categories are commutative squares commuting with the additional (co)algebra structure.
\begin{equation*}
\xymatrix{
A\ar[d]_{(f,\phi)} \ar[r]^{u} & C \ar[d]^{(g,\theta)} \\
B \ar[r]_{v} & D
}
\end{equation*}
In fact, squares such as the above one are the squares in double categories $\LCoalg$ and $\RAlg$.  The key additional operation is that both $L$-coalgebras and $R$-algebras can be composed (vertically) and this enhances $\Lcoalg$ and $\Ralg$ to double categories of left and right maps $\LCoalg$ and $\RAlg$, equipped with forgetful double functors $\LCoalg \to \Sq(\mathcal{C})$ and $\RAlg \to \Sq(\mathcal{C})$ to the double category of commutative squares in $\mathcal C$.

In fact, by Proposition 20 of \cite{Bourke2016Accessible}, the whole awfs is determined up to isomorphism by either of these double categories over $\Sq(\mathcal{C})$, similar to the fact that a weak factorisation system is determined by its left or right class.  For instance, we have $\LCoalg^\dpf \cong \RAlg$, where $(-)^\dpf$ is the right lifting operation construction, which we now recall.

This operation $(-)^\dpf$  takes as input a double functor $U \colon \mathbb{J} \to \Sq(\mathcal{C})$ and produces a further double category equipped with a forgetful double functor $V \colon \mathbb{J}^\dpf \to \Sq(\mathcal{C})$.    
The objects and horizontal arrows of $\mathbb{J}^\dpf$ are those of $\mathcal{C}$ itself.  A vertical arrow of $\mathbb{J}^\dpf$ is a pair $(f,\phi)$ consisting of an arrow $f \colon X \to Y$ of $\mathcal{C}$ together with a lifting operation $\phi$ which provides fillers in each commutative square as below
\begin{equation}\label{eq:lifting}
\cd{ UA \ar[d]_{Uj} \ar[r]^-{u} &  X \ar[d]^{f}  \\
UB \ar@{.>}[ur]|{\phi_{j}(u,v)} \ar[r]^-{v}  &Y}
\end{equation}

The liftings must satisfy a \emph{horizontal compatibility condition}, which says that given a morphism $r\colon i \to j \in \mathcal{J}_1$, we have the equality of diagonals in 

\begin{equation}\label{eq:horizontal}
\cd{UA \ar[d]_{Ui} \ar[r]^-{Ur_0}  & UC \ar[d]_{Uj} \ar[r]^s & X  \ar[d]^{f} \\ 
UB \ar[r]_-{Ur_1}  & UD \ar@{.>}[ur]|{\phi_{j}} \ar[r]_t & Y} 
  \quad = \quad 
  \cd{UA \ar[d]_{Ui} \ar[rr]^-{s.Ur_0}  && X \ar[d]^{f} \\ 
  UB \ar@{.>}[urr]|{\phi_{i}} \ar[rr]_-{t.Ur_1}  && Y}
\end{equation}
where we have omitted certain labels for $\phi$.  Furthermore, they must satisfy a \emph{vertical compatibility condition}, which says that given a composable pair $j \circ i\colon A \to B \to C \in \dcat{\mathcal{J}}$ of vertical morphisms, we have the equality of diagonals  from bottom left to top right in
\begin{equation}\label{eq:vertical}
\cd[@-0.6em@C+1.2em]{
  UA \ar[d]_{Ui} \ar[rr]^s && X \ar[dd]^{f} \\
  UB \ar[d]_{Uj}  \ar@{.>}[urr]^{{\phi_{i}}}\\
  UC \ar[rr]_-{t}  \ar@{.>}[uurr]_{{\phi_{j}}} &&  Y }
  \quad = \quad
  \cd[@-0.6em@C+1.2em]{
  UA \ar[d]_{Ui} \ar[rr]^s && X \ar[dd]^{f} \\
  UB \ar[d]_{Uj} \\
  UC \ar[rr]_-{t}  \ar@{.>}[uurr]_{{\phi_{j \circ i}}} && Y }
\end{equation}
Squares in $\mathbb{J}^\dpf$ are commutative squares which are compatible with the lifting functions.  

We refer the reader to  \cite{Bourke2016Accessible} for the remaining details on the double category structure, where the evident vertical and horizontal composition operations are fully described.  For our purposes, it suffices to understand the category $\mathbb{J}^\dpf_1$ of vertical arrows and squares, in which composition is simply composition of commutative squares.  Here the faithful forgetful functor $V_1 \colon \mathbb{J}^\dpf_1 \to \mathcal{C}^{\2b}$ simply forgets the chosen liftings.

An awfs $(L,R)$ is then said to be cofibrantly generated by $\mathbb{J}$ as above if there is an isomorphism of double categories $\RAlg \cong \mathbb{J}^\dpf$ over $\Sq(\mathcal{C})$.  An important result for us is the following one.

\begin{thm} [\emph{Proposition 13 of  \cite{Bourke2023Orthogonal}}]\label{thm:doublecof}
$U \colon \mathbb{J} \to \Sq(\mathcal{C})$ cofibrantly generates an awfs $(L,R)$  if and only if $V_1 \colon \mathbb{J}^\dpf_1 \to \mathcal{C}^{\2b}$ has a left adjoint.
\end{thm}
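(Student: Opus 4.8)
The plan is to prove the two implications separately, with the ``only if'' direction being essentially formal and the ``if'' direction resting on a monadicity argument together with the known correspondence between awfs and their double categories of right maps. For ``only if'', suppose $U$ cofibrantly generates $(L,R)$, so that there is an isomorphism $\RAlg \cong \mathbb{J}^\dpf$ of double categories over $\Sq(\mathcal{C})$. Restricting to the categories of vertical arrows and squares yields an isomorphism $\Ralg \cong \mathbb{J}^\dpf_1$ over $\mathcal{C}^{\2b}$, identifying $V_1$ with the forgetful functor $\Ralg \to \mathcal{C}^{\2b}$. The latter is the forgetful functor of the monad $R$ and so has the free-algebra functor as a left adjoint; transporting this left adjoint across the isomorphism shows that $V_1$ has a left adjoint.

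For ``if'', write $F_1 \dashv V_1$ for the given adjunction and let $R = V_1 F_1$ be the induced monad on $\mathcal{C}^{\2b}$. The first step is to show that $V_1$ is \emph{monadic}, so that $\mathbb{J}^\dpf_1 \cong \Ralg$ over $\mathcal{C}^{\2b}$. Here I would verify Beck's conditions: $V_1$ is faithful by construction and conservative because a square compatible with liftings has a compatible inverse whenever it is invertible in $\mathcal{C}^{\2b}$; and $V_1$ creates coequalisers of $V_1$-split pairs because a lifting operation is a family of fillers landing in the domain of the arrow, subject only to the equations \eqref{eq:horizontal} and \eqref{eq:vertical}, so along an absolute coequaliser the splitting transports these fillers uniquely to the quotient. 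With the left adjoint supplied by hypothesis, monadicity follows. One should also record that $R$ is a monad \emph{over} $\cod$: since lifting operations concern only fillers into the domain of an arrow and leave its codomain fixed, both $V_1$ and the free construction live fibrewise over $\cod \colon \mathcal{C}^{\2b} \to \mathcal{C}$, so the unit and multiplication of $R$ are identities on codomains.

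It remains to promote the monad $R$ to a full awfs realising $\mathbb{J}^\dpf$ as its double category of right maps. For this I would invoke the correspondence (Proposition 20 of \cite{Bourke2016Accessible}), under which an awfs is determined by a concrete double category over $\Sq(\mathcal{C})$ whose vertical-arrow forgetful functor is monadic over $\cod$. The double category $\mathbb{J}^\dpf$ has exactly this shape: its objects and horizontal arrows are those of $\mathcal{C}$, the functor $V_1$ is monadic over $\cod$ by the previous step, and its vertical composition --- composing lifting operations along composable vertical arrows --- supplies precisely the extra data needed to equip $R$ with a comultiplication and hence to build the comonad $L$ with $(L,R)$ an awfs. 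Chasing the isomorphism $\Ralg \cong \mathbb{J}^\dpf_1$ back up to the double-categorical level then gives $\RAlg \cong \mathbb{J}^\dpf$ over $\Sq(\mathcal{C})$, that is, $U$ cofibrantly generates $(L,R)$. I expect the main obstacle to be this final step: checking that the vertical composition of $\mathbb{J}^\dpf$ genuinely assembles into the coherent comultiplication of an awfs --- in effect, that the abstract characterisation of awfs via their right-map double categories applies to $\mathbb{J}^\dpf$ --- rather than the comparatively routine monadicity bookkeeping.
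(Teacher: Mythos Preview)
The paper does not prove this statement; it is quoted verbatim as Proposition~13 of \cite{Bourke2023Orthogonal} and used as a black box. So there is no in-paper proof to compare against.

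That said, your outline is the right shape and matches the argument in the cited source. The ``only if'' direction is exactly as you say. For ``if'', your monadicity check via Beck is correct and the observation that the unit of $R$ is the identity on codomains (so $R$ lives over $\cod$) is the right structural fact. One point to sharpen: Proposition~20 of \cite{Bourke2016Accessible} only tells you that an awfs is \emph{determined by} its double category of $R$-algebras, not that an arbitrary concrete double category over $\Sq(\mathcal C)$ with monadic vertical forgetful functor \emph{arises from} an awfs. The converse you need is the companion recognition theorem --- characterising which double categories over $\Sq(\mathcal C)$ are of the form $\RAlg$ --- and verifying its hypotheses for $\mathbb{J}^\dpf$ is precisely the work of Proposition~13 in \cite{Bourke2023Orthogonal}. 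You have correctly flagged this as the substantive step; just be aware that it is not a corollary of Proposition~20 alone but requires the full double-categorical machinery developed in those papers.
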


By this result, constructing the algebraic weak factorisation systems cofibrantly generated by $\mathbb{J}$ amounts to describing the left adjoint to $V_1$, which is what we will do using an explicit construction in Section~\ref{sect:double}.

Let us also recall the simpler case of cofibrant generation by a category of morphisms $U \colon \mathcal{J} \to \mathcal{C}^{\2b}$.  This gives rise to a double category of morphisms $V \colon \mathcal{J}^\dpf \to \Sq(\mathcal{C})$ whose objects are again morphisms $f \colon X \to Y$ equipped with a lifting operation as depicted below

\begin{equation}\label{eq:lifting2}
\cd{ UA \ar[d]_{Uj} \ar[r]^-{u} &  X \ar[d]^{f}  \\
UB \ar@{.>}[ur]|{\phi_{j}(u,v)} \ar[r]^-{v}  &Y}
\end{equation}
subject to the horizontal compatiblity equation \eqref{eq:horizontal} with respect to morphisms in $\mathcal{J}$ but no vertical compatibilities.  An awfs $(L,R)$ is then said to be cofibrantly generated by $\mathcal J$ as above if there is an isomorphism of double categories $\RAlg \cong \mathcal J^\dpf$ over $\Sq(\mathcal{C})$.  

Note that the above construction on categories of morphisms is subsumed by the double-categorical version.  To see this, observe that $U$ can be extended to a morphism of internal graphs
\[\begin{tikzcd}
	{\mathcal J} & {\mathcal C^{\2b}} \\
	{\mathcal C} & {\mathcal C}
	\arrow["U", from=1-1, to=1-2]
	\arrow["{d.U}"', shift right=2, from=1-1, to=2-1]
	\arrow["{c.U}", shift left=2, from=1-1, to=2-1]
	\arrow["c", shift left=2, from=1-2, to=2-2]
	\arrow["d"', shift right=2, from=1-2, to=2-2]
	\arrow["1", from=2-1, to=2-2]
\end{tikzcd}\]
in $\Cat$, where $d$ and $c$ denote the domain and codomain operations.  Now we form the free internal category $D(\mathcal J)$ on the internal graph $d.U, c.U \colon \mathcal J \to \mathcal C$, which is the double category whose vertical morphisms (resp. squares) are composable sequences of vertical morphisms (resp. squares).  Then taking the corresponding adjoint double functor $U \colon D(\mathcal J) \to \Sq(\mathcal{C})$, we obtain $\mathcal{J}^\dpf \cong D(\mathcal{J})^\dpf$ since the vertical compatibility condition ensures that a lifting operation on a composite is constructed from the liftings against its individual components.  

In particular, cofibrant generation by categories of morphisms is a special case of cofibrant generation by double categories.

\subsection{Pointed endofunctors and their free algebras} \label{section:free_alg}

In this section we review the construction of free algebras on pointed endofunctors.  The standard reference is Kelly's paper \cite{Kelly1980A-unified}, specifically Theorems 14.3 and 15.6.  Here we instead use the approach of algebraic chains, developed by Koubek and Reiterman in \cite{Koubek1979Categorical} to describe the free algebra on an endofunctor, and slightly adapted in \cite{BourkeEquipping} to handle pointed endofunctors.  The advantage of this approach is that it emphasises the explicit formulae involved by focusing not only on the free algebra but also on the free algebraic chain.  

The construction we describe closely follows Appendix A of \cite{BourkeEquipping}.  However, we also fill in some details left to the reader in  \cite{BourkeEquipping}, since we will need to adapt these results to handle the double-categorical small object argument, which lives outside the pointed endofunctor setting.

Let $(T,\eta)$ be a pointed endofunctor on a category $\mathcal C$.  A $T$-algebra $(X,x)$ consists of an object $X \in \mathcal C$ together with a morphism $x \colon TX \to X$ such that $x \circ \eta_X = \id_X$.  Together with the evident structure-preserving morphisms, these form a category $\Talg$.  The construction of free $T$-algebras is non-trivial and involves a certain transfinite sequence. In order to explain where the sequence comes from, it is natural to use \emph{algebraic chains}.

To begin with, recall that a chain is a functor $X:\Ord \to \mathcal C$ on the posetal category of ordinals, whilst a chain map is a natural transformation.   Given a pointed endofunctor $(T,\eta)$ on $\mathcal C$ an algebraic chain $(X,x)$ is a chain $X$ together with, for each ordinal $n$, a map $x_n:TX_n \to X_{n+1}$ satisfying
\begin{itemize}
\item for all $n$
\begin{equation}\label{eq:unit}
\xymatrix{
X_{n} \ar[drr]_{j_{n}^{n+1}} \ar[rr]^{\eta_{X_{n}}} && TX_{n} \ar[d]^{x_{n}} \\
 && X_{n+1}
}
\end{equation}
\item and for all $n < m$ the diagram 
\begin{equation}\label{eq:alg}
\xymatrix{
TX_{n} \ar[d]_{x_{n}} \ar[rr]^{T(j_{n}^{m})} && TX_{m} \ar[d]^{x_{m}} \\
X_{n+1} \ar[rr]_{j_{n+1}^{m+1}} && X_{m+1}
}
\end{equation}
commutes.
\end{itemize}
A morphism $f:(X,x) \to (Y,y)$ of algebraic chains is a chain map that commutes with the $x_n$ and $y_n$ for all $n$.  These are the morphisms of the category  $\Talg_\infty$ of algebraic chains.

There is a constant functor $\Delta:\Talg \to \Talg_\infty$, sending $(X,x)$ to the constant chain on $X$ equipped with $x_n=x$ for all $n$, and a forgetful functor $V:\Talg_\infty \to \mathcal C$ sending $(X,x)$ to $X_0$ and their composite yields the forgetful functor from the categories of algebras, as depicted below.
\begin{equation}\label{eq:forgetfultriangle}
\xymatrix{
\Talg \ar[dr]_{U} \ar[r]^{\Delta} & \Talg_\infty \ar[d]^{V} \\
& \mathcal C}
\end{equation}
This breaks the problem of constructing the free algebra into two parts: (1) constructing the free algebraic chain $X^{\bullet}$ on  $X$ and (2) establishing when it admits a reflection along $\Delta$.  

The interesting part is constructing the free algebraic chain.  To see how the formula for it naturally arises, note that the equation \eqref{eq:alg} holds for all $n < m$ if it does so in the cases (a) $m=n+1$ and (b) $m$ is a limit ordinal.  Now consider a chain $X$ equipped with maps $x_{n}:TX_n \to X_{n+1}$ satisfying \eqref{eq:unit}.  Then case (a) of \eqref{eq:alg} becomes the assertion that for all $n$ the diagram
\begin{equation}\label{eq:alg1}
\xymatrix{
TX_{n} \ar@<.6ex>[rr]^{Tx_{n} \circ T\eta_{X_{n}}}\ar@<-.6ex>[rr]_{Tx_{n} \circ \eta_{TX_{n}}} && TX_{n+1} \ar[r]^{x_{n+1}} & X_{n+2}
}
\end{equation}
is a fork.   

Also, using that $x_{m} \circ \eta_{X_{m}} = j_{m}^{m+1}$, case (b) of \eqref{eq:alg} becomes the assertion that for all limit ordinals $m$ and $n<m$ the diagram
\begin{equation*}
\xymatrix{
TX_{n} \ar@<0.5ex>[rr]^{ T{j_n^m}} \ar@<-0.5ex>[rr]_{ \eta_{X_{m}} \circ j_{n+1}^{m} \circ x_{n}} && TX_{m} \ar[r]^{x_{m}} & X_{m+1}
}
\end{equation*}
is a fork, which, in the presence of colimits of chains, equally asserts that for each limit ordinal $m$ the diagram 
\begin{equation}\label{eq:alg2}
\xymatrix{
\colim_{n<m}TX_{n} \ar@<0.5ex>[rr]^(.6){\langle T{j_n^m} \rangle} \ar@<-0.5ex>[rr]_(.6){\langle \eta_{X_{m}} \circ j_{n+1}^{m} \circ x_{n} \rangle} && TX_{m} \ar[r]^{x_{m}} & X_{m+1}
}
\end{equation}
is a fork.  These reformulations lead naturally to the following proposition, whose proof is then completely routine.

\begin{prop}\label{prop:free}
If $\mathcal C$ is cocomplete then $V$ has a left adjoint whose value at $X \in \mathcal C$ is the algebraic chain $X^{\bullet}$ with values:
\begin{itemize}
\item $X^{\bullet}_{0}=X$, $X^{\bullet}_{1}=TX$, $j_{0}^{1}=\eta_{X}:X \to TX$ and $x_0=1:TX \to TX$.
\item At an ordinal of the form $n+2$ the object $X^{\bullet}_{n+2}$ is the coequaliser
\begin{equation*}
\xymatrix{
TX^{\bullet}_{n} \ar@<.6ex>[rr]^{Tx_{n} \circ T\eta_{X^{\bullet}_{n}}}\ar@<-.6ex>[rr]_{Tx_{n} \circ \eta_{TX^{\bullet}_{n}}} && TX^{\bullet}_{n+1} \ar[r]^{x_{n+1}} & X^{\bullet}_{n+2}
}
\end{equation*}
with $j_{n+1}^{n+2}=x_{n+1} \circ \eta_{X^{\bullet}_{n+1}}$.
\item At a limit ordinal $\alpha$,
\begin{itemize} 
\item $X^{\bullet}_{\alpha} = \colim_{n < \alpha} X^{\bullet}_{n}$ with the connecting maps $j_n^\alpha$ the colimit inclusions.  
\item $X^{\bullet}_{\alpha + 1}$ is the coequaliser
\begin{equation*}
\xymatrix{
\colim_{n<\alpha}TX^{\bullet}_{n} \ar@<0.5ex>[rr]^(.6){\langle T{j_n^\alpha} \rangle} \ar@<-0.5ex>[rr]_(.6){\langle \eta_{X^{\bullet}_{\alpha}} \circ j_{\alpha +1}^{\alpha} \circ x_{n} \rangle} && TX^{\bullet}_{\alpha} \ar[r]^{x_{\alpha}} & X^{\bullet}_{\alpha+1}
}
\end{equation*}
with $j_{\alpha}^{\alpha+1}=x_{\alpha} \circ \eta_{X^{\bullet}_{\alpha}}$.
\end{itemize}
\end{itemize}
\end{prop}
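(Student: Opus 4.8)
The plan is to establish the two assertions implicit in the statement: first, that the data described really does constitute an algebraic chain $X^{\bullet}$; and second, that it carries the universal property exhibiting $X \mapsto X^{\bullet}$ as a left adjoint to $V$, with unit the identity $\id_X \colon X \to V X^{\bullet} = X^{\bullet}_0$.

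For the first assertion I would begin by checking that $X^{\bullet}$ is a genuine chain, i.e. a functor $\Ord \to \mathcal C$. Its connecting maps are generated by the successor maps $j_n^{n+1} = x_n \circ \eta_{X^{\bullet}_n}$ together with the colimit inclusions at limit stages, and coherence of these is immediate: at limit stages $X^{\bullet}_\alpha$ is \emph{defined} as the colimit, and at successor stages the defining formulae leave no choice. The unit condition \eqref{eq:unit} then holds by construction, since in each of the three clauses $j_n^{n+1}$ is set equal to $x_n \circ \eta_{X^{\bullet}_n}$ (with the base case $j_0^1 = \eta_X = \id_{TX} \circ \eta_X = x_0 \circ \eta_{X^{\bullet}_0}$). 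It remains to verify the compatibility condition \eqref{eq:alg} for all $n < m$, and here I would invoke the reduction already recorded before the statement: by a short transfinite induction on $m$, using that the squares \eqref{eq:alg} paste horizontally (so that instances $(n,p)$ and $(p,m)$ compose to give $(n,m)$, as connecting maps compose), the condition for all $n<m$ follows from case (a) $m = n+1$ and case (b) $m$ a limit ordinal. Case (a) is precisely the fork \eqref{eq:alg1}, which holds because $x_{n+1}$ was defined as the coequaliser of its two legs; case (b) is precisely the fork \eqref{eq:alg2}, which holds because $x_\alpha$ was defined as the corresponding coequaliser.

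For the universal property, given an algebraic chain $(Y,y)$ and a morphism $g \colon X \to Y_0$, I would construct the unique morphism of algebraic chains $\hat g \colon X^{\bullet} \to (Y,y)$ with $\hat g_0 = g$ by transfinite recursion mirroring the construction of $X^{\bullet}$. At stage $0$ we are forced to take $\hat g_0 = g$; at stage $1$ the algebra law $\hat g_1 \circ x_0 = y_0 \circ T\hat g_0$ together with $x_0 = \id_{TX}$ forces $\hat g_1 = y_0 \circ Tg$. At a successor $n+2$, since $x_{n+1}$ is a coequaliser there is at most one $\hat g_{n+2}$ with $\hat g_{n+2} \circ x_{n+1} = y_{n+1} \circ T\hat g_{n+1}$, and it exists provided $y_{n+1} \circ T\hat g_{n+1}$ coequalises the two legs of \eqref{eq:alg1}; this reduces, via naturality of $\eta$ and the inductive algebra law at stage $n$, to the fork \eqref{eq:alg1} for $(Y,y)$. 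At a limit $\alpha$ the map $\hat g_\alpha$ is the unique one induced on the colimit by the cocone $\{ j_n^\alpha \circ \hat g_n \}_{n < \alpha}$, compatible because $\hat g$ is already a chain map below $\alpha$; and at $\alpha+1$ one argues as in the successor case, now using the limit fork \eqref{eq:alg2}. The chain-map condition on $\hat g$ is no extra burden: for successor connecting maps it follows from the algebra law via $j_n^{n+1} = x_n \circ \eta$ and naturality of $\eta$, and for the remaining ones by composition and from the colimit definitions at limit stages. Naturality of the resulting bijection $\Talg_\infty(X^{\bullet}, -) \cong \mathcal C(X, V(-))$ is then routine.

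The genuinely load-bearing steps, where I would be most careful, are the two coequaliser factorisations in the recursion: verifying that $y_{n+1} \circ T\hat g_{n+1}$, respectively $y_\alpha \circ T\hat g_\alpha$, coequalises the defining pair so that $\hat g_{n+2}$, respectively $\hat g_{\alpha+1}$, exists. Everything else is bookkeeping — organising the transfinite induction so that at each stage the needed hypotheses (the algebra and chain-map laws at all earlier stages) are in hand, and keeping straight which connecting maps live in $X^{\bullet}$ and which in $Y$. None of this is conceptually hard, precisely because the reformulations preceding the statement have already converted the algebraic-chain axioms into exactly the fork conditions witnessed by the defining coequalisers; the proof therefore amounts to transcribing those reformulations into an inductive existence-and-uniqueness argument, and is in this sense routine.
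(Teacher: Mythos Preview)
Your proposal is correct and follows essentially the same approach as the paper: verify that $X^{\bullet}$ is an algebraic chain using the reformulations \eqref{eq:alg1} and \eqref{eq:alg2} that precede the statement, then construct the unique extension $\hat g$ by transfinite recursion, using the universal properties of the defining coequalisers and colimits at each stage. The paper's proof is slightly terser (it does not separately discuss the chain-map condition on $\hat g$), but the substance is identical.
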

\begin{proof}

To see that $X^\bullet$ is an algebraic chain, observe that the unit compatibility \eqref{eq:unit} holds by definition of the connecting maps $j_n^\alpha$.  Therefore, it will be an algebraic chain just when the equations $\eqref{eq:alg1}$ and $\eqref{eq:alg2}$ hold, which is the case by definition of $X^{\bullet}$.

The unit of the adjunction will be the identity --- so, we are to show that given $f:X \to Y_{0}=V(Y,y)$ there exists a unique map $f:X_\bullet \to (Y,y)$ of algebraic chains with $f_{0}=f$.  The required commutativity below left
\begin{equation*}
\xymatrix{
TX \ar[d]_{Tf} \ar[r]^{x_{0}=1} & TX \ar[d]^{f_1} \\
TY_0 \ar[r]^{y_0} & Y_{1}}
\hspace{1cm}
\xymatrix{
TX^{\bullet}_{n} \ar[d]^{Tf_{n}} \ar@<.6ex>[rr]^{Tx_{n} \circ T\eta_{X^{\bullet}_{n}}}\ar@<-.6ex>[rr]_{Tx_{n} \circ \eta_{TX^{\bullet}_{n}}} && TX^{\bullet}_{n+1} \ar[d]^{Tf_{n+1}} \ar@{.>}[r]^{x_{n+1}} & X^{\bullet}_{n+2} \ar@{.>}[d]^{f_{n+2}} \\
TY_{n} \ar@<.6ex>[rr]^{Ty_{n} \circ T\eta_{Y_{n}}}\ar@<-.6ex>[rr]_{Ty_{n} \circ \eta_{TY_{n}}} && TY_{n+1} \ar[r]^{y_{n+1}} & Y_{n+2}
}
\end{equation*}
forces us to set $f_{1}=y_{0} \circ Tf$.  The map $f_{n+2}$ must render the right square in the diagram above right commutative.  But since the two back squares serially commute and the bottom row is a fork there exists a unique map from the coequaliser $X^{\bullet}_{n+2}$ rendering the right square commutative.  This uniquely specifies $f_\alpha$ for ordinals $\alpha$ of the form $n+2$.  At a limit ordinal $\alpha$, $f_\alpha:X^{\bullet}_{\alpha}=\colim_{n < \alpha} X^{\bullet}_{n} \to Y_{\alpha}$ is the unique map from the colimit commuting with the connecting maps --- which it must do to form a morphism of chains.  At the successor of a limit ordinal $\alpha$ there is a unique map $f_{\alpha+1}:X^{\bullet}_{\alpha+1} \to Y_{\alpha+1}$ from the coequaliser satisfying $f_{\alpha+1} \circ x_{\alpha} = y_{\alpha} \circ Tf_{\alpha}$, as required.
\end{proof}


We say that an algebraic chain $(X,x)$ \emph{stabilises} at an ordinal $n$ if for all $m > n$ the map $j_n^m : X_n \to X_m$ is invertible.  

\begin{prop}\label{prop:stablefree}
If the algebraic $(X,x)$ stabilises at an ordinal $n$, then $X_n$ admits the structure of a $T$-algebra for the pointed endofunctor
\begin{equation}\label{eq:structure}
\beta = (j_n^{n+1})^{-1} \circ x_n:TX_n \to X_{n+1} \cong X_n
\end{equation}
and this is a reflection of $(X,x)$ along $\Delta$. 
\end{prop}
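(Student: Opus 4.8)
The plan is to first check that $\beta$ is a $T$-algebra structure, then exhibit the unit of the reflection as a chain map $\rho \colon (X,x) \to \Delta(X_n,\beta)$ built from the connecting maps and their inverses, and finally verify the universal property by a direct computation. Throughout, the two facts I will keep invoking are the unit condition \eqref{eq:unit}, which gives $x_m \circ \eta_{X_m} = j_m^{m+1}$, and the algebra condition \eqref{eq:alg}, which gives $j_{p+1}^{q+1} \circ x_p = x_q \circ T(j_p^q)$ for $p < q$; together with the hypothesis that $j_n^m$ is invertible for every $m > n$.

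The algebra axiom for $\beta$ is immediate: by \eqref{eq:unit} we have $x_n \circ \eta_{X_n} = j_n^{n+1}$, so $\beta \circ \eta_{X_n} = (j_n^{n+1})^{-1} \circ x_n \circ \eta_{X_n} = \id_{X_n}$. Next I would define the components $\rho_m \colon X_m \to X_n$ of the unit by $\rho_m = j_m^n$ for $m \le n$ and $\rho_m = (j_n^m)^{-1}$ for $m \ge n$ (these agree at $m=n$, where both are the identity). Checking that $\rho$ is a chain map into the constant chain — i.e.\ that $\rho_{m'} \circ j_m^{m'} = \rho_m$ — is a three-case verification using functoriality of the chain in the two outer cases and cancelling an invertible connecting map in the mixed case $m \le n \le m'$.

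Checking the algebra compatibility $\rho_{m+1} \circ x_m = \beta \circ T\rho_m$ is the only step with any content, and this is where I expect the main (though still routine) effort to lie. For $m > n$ one substitutes $x_m = j_{n+1}^{m+1} \circ x_n \circ T((j_n^m)^{-1})$, read off from \eqref{eq:alg} with $p=n$, $q=m$ after noting that $j_{n+1}^{m+1}$ is invertible, and then uses the identity $(j_n^{m+1})^{-1} \circ j_{n+1}^{m+1} = (j_n^{n+1})^{-1}$; for $m < n$ one instead applies \eqref{eq:alg} with $p=m$, $q=n$ together with $j_n^{n+1} \circ j_{m+1}^n = j_{m+1}^{n+1}$; and $m=n$ is trivial since both sides equal $\beta$. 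This case analysis, resting throughout on the invertibility of the stabilised connecting maps, is the crux of the argument.

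Finally, for the universal property, suppose $(B,b)$ is a $T$-algebra and $g \colon (X,x) \to \Delta(B,b)$ a chain map, with components $g_m \colon X_m \to B$ satisfying $g_{m'} \circ j_m^{m'} = g_m$ and $g_{m+1} \circ x_m = b \circ Tg_m$. Since $\rho_n = \id_{X_n}$, the required factorisation $\bar g \circ \rho_m = g_m$ forces $\bar g = g_n$, which gives uniqueness; for existence I set $\bar g = g_n$ and verify that it is an algebra map via $g_n \circ \beta = g_n \circ (j_n^{n+1})^{-1} \circ x_n = g_{n+1} \circ x_n = b \circ Tg_n$, using that $g$ is a chain map and then an algebra-chain map, and that $g_n \circ \rho_m = g_m$ for all $m$, which is again just the chain-map property of $g$ applied to $j_m^n$ (for $m \le n$) or to $(j_n^m)^{-1}$ (for $m \ge n$). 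This completes the reflection.
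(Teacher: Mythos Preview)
Your proof is correct and follows essentially the same approach as the paper's: both verify the algebra axiom for $\beta$ from the unit condition, define the chain morphism components via the maps $j_m^n$ (for $m\le n$) and $(j_n^m)^{-1}$ (for $m\ge n$), and check the algebraic chain compatibility $\rho_{m+1}\circ x_m=\beta\circ T\rho_m$ by the same three-case analysis using \eqref{eq:alg}.  The only difference is organisational: you explicitly construct the unit $\rho$ first and then verify the universal property by factoring an arbitrary $g$ through it, whereas the paper establishes the bijection between $\Talg((X_n,\beta),(A,a))$ and $\Talg_\infty((X,x),\Delta(A,a))$ directly, building the chain map from a given algebra map (your $\rho$ is the case $A=X_n$, $a=\beta$, $f=\id$ of their construction).
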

\begin{proof}
 Firstly note that $(X_n,\beta)$ is indeed a $(T,\eta)$-algebra on $X_n$ since both triangles in the following diagram commute
    \[
    \xymatrix{
    X_n \ar[rd]_{j_n^{n+1}} \ar[rrrd]^1 \ar[d]_{\eta_{X_n}} & & \\
    TX_n \ar[r]_{x_n} & X_{n+1} \ar[rr]_{(j_n^{n+1})^{-1}} & & X_n. 
    }
    \]
     Now let $f : (X_n,\beta) \to (A,a)$ be a $T$-algebra morphism. We show that this induces a unique morphism $(X,x) \to \Delta(A,a)$ of algebraic chains with $f_n = f$. Since we have $f_n := f : X_n \to A$, the rest of the morphism is uniquely determined. Indeed, by the fact that it has to be a chain morphism all the squares in the following diagram have to commute
    \[
    \xymatrix{
    X_0 \ar[d]^{f_0} \ar[r]^{j_0^1} & X_1 \ar[d]^{f_1} \ar[r]^{j_1^2} & \ldots \ar[r] & X_n \ar[d]^{f_n} \ar[r]^{j_n^{n+1}} & X_{n+1} \ar[d]^{f_{n+1}} \ar[r]^{j_{n+1}^{n+2}} & \ldots \\
    A \ar[r]_1 & A \ar[r]_1 & \ldots \ar[r]_1 & A \ar[r]_1 & A \ar[r]_1 & \ldots
    }
    \]
    This forces us to define: 
    \[
    f_k = 
    \begin{cases}
    f_n \circ j_k^n, & k < n, \\
    f_n \circ (j_n^k)^{-1}, & k > n. 
    \end{cases}
    \]
    It remains to show that the resulting map is a morphism of algebraic chains. We distinguish three cases. In the case that $k = n$, note that the following diagram commutes since $f$ is a morphism of algebras, 
    \[
    \xymatrix{
    TX_n \ar[rd]_{\beta} \ar[d]_{x_n} \ar[rr]^{T(f_n)} & & TA \ar[d]^a \\
    X_{n+1} \ar[r]_{(j_n^{n+1})^{-1}} & X_n \ar[r]_{f_n} & A.  
    }
    \]
    so we have $f_{n+1} \circ x_n = a \circ T(f_n)$. In the case $k < n$, we look at the following diagram 
    \[
    \xymatrix{
    TX_k \ar[d]_{x_k} \ar[r]^{T(j_{k}^n)} & TX_n \ar[d]^{x_n}\ar[r]^{T(f_n)} & TA \ar[dd]^a \\
    X_{k+1} \ar[d]_1 \ar[r]_{j_{k+1}^{n+1}} & X_{n+1} \ar[d]^{(j_n^{n+1})^{-1}} \\
    X_{k+1} \ar[r]_{j_{k+1}^n} & X_n \ar[r]_{f_n} & A
    }
    \]
    Since every inner square in the diagram commutes, the outer square also commutes, which shows that $f_{k+1} \circ x_k = a \circ T(f_k)$. The case $k > n$ is analogous. 
    
    Conversely, it is easy to see from the above considerations that if $f : (X,x) \to \Delta(A,a)$ is a morphism of algebraic chains, then $f_n : X_n \to A$ is an algebra morphism $(X_n, \beta) \to (A,a)$. 
\end{proof}

\begin{term}(\emph {Size assumptions.})
We now introduce some terminology and conditions concerning size, following \cite{Koubek1979Categorical}.  Firstly, by an $\alpha$-chain we mean a functor $X \colon Ord_{< \alpha} \to \mathcal C$ from the full subcategory of ordinals less than $\alpha$.  Also, for $\alpha$ a limit ordinal, we say that $A \in \mathcal C$ is $\alpha$-small if $\mathcal C(A,-)$ preserves colimits of $\alpha$-chains.  

The simplest size condition that ensures the algebraic chain $X^{\bullet}$ stabilises is that $T$ preserves colimits of $\alpha$-chains for some limit ordinal $\alpha$ --- for instance, if $T$ preserves $\alpha$-filtered colimits.  

In order to cover non-locally presentable examples, such as topological spaces, it is useful to consider also a second size condition with respect to a suitable factorisation system $(E,M)$ on $\mathcal C$.   To this end, recall that a factorisation system $(E,M)$ on $\mathcal C$ is said to be \emph{proper} if each $E$-map is an epi and each $M$-map is a mono, and it is \emph{co-well-powered} if each object $X \in \mathcal C$ admits, up to isomorphism, only a set of $E$-quotients, where an $E$-quotient of $X$ is a morphism in $E$ with domain $X$.  On topological spaces, an important example has left class the surjective continuous maps and right class the subspace embeddings.  

In this context, an $(\alpha,M)$-chain is an $\alpha$-chain all of whose connecting homomorphisms belong to the right class $M$.  We say that $A \in \mathcal C$ is $(\alpha,M)$-small if $\mathcal C(A,-)$ preserves colimits of $(\alpha,M)$-chains.  
\end{term}

The second size condition is now stated in the theorem below.

\begin{thm}[Theorem 24 of \cite{BourkeEquipping}]\label{thm:chains}
Let $(T,\eta)$ be a pointed endofunctor on a cocomplete category $\mathcal C$.  If either
\begin{enumerate}
\item[(a)] $T$ preserves colimits of $\alpha$-chains for some limit ordinal $\alpha$, or
\item[(b)] $\mathcal C$ is equipped with a co-well-powered proper factorisation system $(E,M)$ such that $T$ preserves colimits of  $(\alpha,M)$-chains for some limit ordinal $\alpha$.
\end{enumerate}
Then (1) each algebraic chain $X^{\bullet}$ stabilises and (2) its point of stabilisation, with algebra structure as in \eqref{eq:structure}, is the free $T$-algebra on $X$.
\end{thm}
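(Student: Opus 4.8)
The plan is to reduce everything to the stabilisation claim~(1), since the identification~(2) is then immediate: if $X^{\bullet}$ stabilises at an ordinal $n$, Proposition~\ref{prop:stablefree} exhibits $(X_n,\beta)$ as a reflection of $X^{\bullet}$ along $\Delta$, and composing this with the reflection $X \mapsto X^{\bullet}$ along $V$ from Proposition~\ref{prop:free} (using $U = V\Delta$) yields a reflection of $X$ along $U$, i.e.\ the free $T$-algebra on $X$. So the whole content lies in showing that $X^{\bullet}$ stabilises under either hypothesis.

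The engine of the stabilisation argument is a single observation about the successor and limit clauses of Proposition~\ref{prop:free}: whenever $T$ preserves the colimit feeding into a limit stage $\alpha$, the leg $\langle Tj_n^\alpha\rangle$ of the defining coequaliser becomes invertible, and I claim the coequaliser then collapses so that $j_\alpha^{\alpha+1}$ is invertible. Concretely, writing the colimit comparison $g=\langle Tj_n^\alpha\rangle$ as an iso, I would define the unique $w\colon TX^\bullet_\alpha \to X^\bullet_\alpha$ with $w\circ Tj_n^\alpha = j_{n+1}^\alpha\circ x_n$ (a cocone by \eqref{eq:alg}), and check, using naturality of $\eta$ and the unit law $x_n\circ\eta_{X^\bullet_n}=j_n^{n+1}$, that the second leg of the coequaliser equals $\eta_{X^\bullet_\alpha}\circ w\circ g$ and that $w\circ\eta_{X^\bullet_\alpha}=\mathrm{id}$. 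Thus the coequaliser is that of $\mathrm{id}$ and the split idempotent $\eta_{X^\bullet_\alpha}\circ w$, which is computed to be $w$ itself; hence $X^\bullet_{\alpha+1}\cong X^\bullet_\alpha$ and $j_\alpha^{\alpha+1}=\mathrm{id}$. The identical manipulation at a successor stage, where the first coequaliser leg rewrites as $T(j_n^{n+1})=T(x_n\circ\eta_{X^\bullet_n})$, gives the propagation lemma: if $j_n^{n+1}$ is invertible then so is $j_{n+1}^{n+2}$. Finally, once the connecting maps are invertible from some stage on, every later limit colimit is eventually constant, hence automatically preserved by $T$, so the limit clause keeps invertibility going; transfinite induction then yields genuine stabilisation.

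Under hypothesis (a) this finishes the argument: taking the limit ordinal $\alpha$ for which $T$ preserves colimits of $\alpha$-chains, the displayed observation makes $j_\alpha^{\alpha+1}$ invertible, and the propagation lemma together with the limit remark show $X^{\bullet}$ stabilises at $\alpha$.

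Under hypothesis (b) the plan is to reduce to (a) by proving that $X^{\bullet}$ is eventually an $M$-chain. I would factor each connecting map $j_0^n = m_n\circ e_n$ with $e_n\in E$, $m_n\in M$; properness and the cancellation properties of $(E,M)$ make the induced comparisons $I_n \to I_{n'}$ into a chain of $E$-quotients of $X_0$, which by co-well-poweredness takes only set-many values and hence stabilises at some $\beta$. Past $\beta$ the $E$-content of the construction is exhausted, so the tail connecting maps lie in $M$; the tail is then an $(\alpha,M)$-chain, $T$ preserves its colimit at the limit ordinal $\beta+\alpha$ by hypothesis (and cofinality), and the case-(a) mechanism applies to give stabilisation. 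The main obstacle is exactly this reduction: showing that exhaustion of the $E$-quotients of $X_0$ really forces the tail connecting maps into $M$. This is delicate, since a priori fresh identifications could be introduced at each coequaliser step among elements not visible from $X_0$; controlling them requires exploiting the specific coequaliser and algebraic structure of $X^{\bullet}$ together with properness, rather than abstract co-well-poweredness alone, and is where the analysis of Koubek--Reiterman and Kelly does its real work.
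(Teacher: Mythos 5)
Your reduction of conclusion (2) to conclusion (1) via Propositions \ref{prop:free} and \ref{prop:stablefree}, and your whole treatment of hypothesis (a), match the paper's proof in all essentials. The appendix likewise shows that $j_\alpha^{\alpha+1}$ is invertible by exhibiting your map $w$ (there called $x'_\alpha$) as the coequaliser, using that the components $Tj_n^\alpha$ are jointly epic because $T$ preserves the colimit; it proves the propagation step ($j_n^{n+1}$ invertible $\Rightarrow$ $j_{n+1}^{n+2}$ invertible) by exhibiting a split coequaliser whose retraction $x_n\circ T(j_n^{n+1})^{-1}$ is exactly the splitting of your idempotent; and it closes the transfinite induction by treating limit ordinals as colimits of isomorphisms and successors of limit ordinals by the same collapse mechanism. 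So under (a) your argument is correct and is the paper's argument in slightly different packaging.

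The divergence is in case (b), and there your proposal has a genuine gap --- one you yourself flag. The paper does not attempt your reduction: it simply invokes the lemma of Section 8.5 of \cite{Koubek1979Categorical} (equivalently Proposition 4.2 of \cite{Kelly1980A-unified}), asserting that under hypothesis (b) \emph{every} chain admits a limit ordinal $\alpha$ at which $T$ preserves the colimit of its initial $\alpha$-segment; this is applied to $X^{\bullet}$ and then the case-(a) mechanism takes over. Your sketch of that lemma does not close as stated: stabilisation of the $E$-quotients $I_n$ in the factorisations $j_0^n=m_n\circ e_n$ gives only that $j_n^{n+1}\circ m_n=m_{n+1}\circ d_n$ with $d_n$ eventually invertible, i.e.\ that $j_n^{n+1}\circ m_n\in M$; since $m_n$ is merely a mono, this does not put $j_n^{n+1}$ itself in $M$, and indeed the tail connecting maps of a chain need not eventually lie in $M$ at all (think of a chain in $\Set$ alternating surjections and injections). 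The correct conclusion of the Koubek--Reiterman/Kelly analysis is the existence of a \emph{cofinal $M$-subchain} of some initial segment, obtained by running the co-well-poweredness argument relative to every starting point $X_n$ and diagonalising over limit ordinals; that cofinal subchain has the same colimit, which is what hypothesis (b) lets $T$ preserve. Since the paper defers exactly this point to the literature, your identification of it as the real content of case (b) is accurate, but your proposal neither supplies the argument nor cites it in a form that would complete the proof.
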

\begin{proof}
We defer the proof that $X^{\bullet}$ stabilises to the appendix.  Assuming this, since the triangle
\begin{equation}\label{eq:forgetfultriangle2}
\xymatrix{
\Talg \ar[dr]_{U} \ar[r]^{\Delta} & \Talg_\infty \ar[d]^{V} \\
& \mathcal C}
\end{equation}
commutes, the free $T$-algebra is given by the reflection of the free algebraic chain $X^{\bullet}$ of Proposition \ref{prop:free} along $\Delta$.  By Proposition \ref{prop:stablefree}, this reflection is the point of stabilisation of $X^{\bullet}$.
\end{proof}

\section{The universal property of the pointed endofunctor arising in the small object argument}\label{sect:categorical}

In this section we take a close look at the pointed endofunctor $T$ arising in the first step of Garner's small object argument.  In Theorem \ref{univ_prop_T} we describe a new universal property of $T$ with respect to \emph{$1$-step lifting operations}.  This universal property of $T$ will be of central importance in Section \ref{sect:double}.  We conclude this section by re-deriving Garner's small object argument for cofibrant generation by a small category of morphisms.  

\subsection{The pointed endofunctor and its universal property}
Let $\mathcal C$ be a cocomplete locally small category, and consider a functor $U : \mathcal{J} \to \mathcal{C}^{\2b}$ where $\mathcal {J}$ is small.  In this case, as explained in \cite{Garner2011Understanding}, there is a pointed endofunctor $T$ with $\Tealg \cong \mathcal{J}^\pitchfork$.  

The construction begins by forming the left Kan extension $C \colon \mathcal{C}^{\2b} \to \mathcal{C}^{\2b}$ of $U$ along itself (i.e. the density comonad of $U$).  This can be described using coends or conical colimits over comma categories, and we will use the latter description here.  

For $f \in \mathcal{C}^{\2b}$, consider the comma category $U \downarrow f$ whose objects are pairs $(j, \sigma)$, where $j \in \mathcal{J}$ is an object, and $\sigma : Uj \to f$ is a commutative square in $\mathcal{C}$ and whose morphisms $(i,\tau) \to (j,\sigma)$ are morphisms $\alpha : i \to j$ in $\mathcal{J}$ such that $\sigma \circ U \alpha = \tau$.

%

We have the forgetful functor $U \downarrow f \to \mathcal{J} : (j,\sigma) \mapsto j$ and then 
$$Cf := \colim(U \downarrow f \to \mathcal{J} \to \mathcal{C}^{\2b})$$
with colimit cocone $\iota_{j,\sigma} \colon Uj \to Cf$ for $(j,\sigma) \in \mathcal{J} \downarrow f$.

By construction, the family $(\sigma : Uj \to f)_{(j,\sigma) \in \mathcal{U} \downarrow f}$ forms a cocone over the diagram and thus we get a uniquely induced arrow $\varepsilon_f : Cf \to f$ such that 
\begin{equation}
\begin{tikzcd}\label{epsilon}
	Uj && Cf \\
	&& f
	\arrow["{\iota_{j,\sigma}}", from=1-1, to=1-3]
	\arrow["\sigma"', from=1-1, to=2-3]
	\arrow["{\varepsilon_f}", from=1-3, to=2-3]
\end{tikzcd}
\end{equation}
 commutes for all $(j, \sigma) \in \mathcal{U} \downarrow f$.   We then form the pushout in the left square below, which induces a factorisation of $\varepsilon_f$ as depicted:
    \begin{equation}\label{def_T}
        \xymatrix{
    A \ar[d]_{Cf} \ar[r]^{(\varepsilon_f)_0} \ar@/^2pc/[rr]^{(\varepsilon_f)_0} & X \ar[d]_{Kf} \ar[r]^{1} &  X \ar[d]^{f}  \\
    B \ar@/_2pc/[rr]_{(\varepsilon_f)_1} \ar[r]^{q_f} & Sf \ar@{-->}[r]^{Tf} & Y.
    }
  \hspace{2cm}
    \xymatrix{
    Cf\ar[dr]_{\varepsilon_f} \ar[r]^{\gamma_f} & Kf \ar[d]^{(1,Tf)} \\
    & f}
   \end{equation}

  in which $Tf$ is the uniquely induced map from the pushout.  
  
  In the corresponding factorisation above right, we denote $\gamma_f := ((\varepsilon_f)_0,q_f)$ for convenience.  In fact, this is the factorisation associated to an orthogonal factorisation system on $\mathcal{C}^{\2b}$, whose left class consist of the pushout squares and whose right class are morphisms $\alpha \colon f \to g$ in $\mathcal{C}^{\2b}$ with $\alpha_0$ an isomorphism.  We will use this fact again shortly.

Furthermore, we obtain a commutative square 
\begin{equation}
\xymatrix{
X \ar[d]_{f} \ar[r]^{Kf} & Sf \ar[d]^{Tf}  \\
Y \ar[r]_1 & Y 
}
\end{equation}
which we denote by $\eta_f :=(Kf,1) \colon f \to Tf$.  We now turn towards describing a universal property of $Tf$.

\begin{defi}
A \emph{one-step lifting structure} from $f$ to $g$ consists of a morphism $u : f \to g \in \mathcal{C}^{\2b}$ equipped with the following lifting operation: for any $j \in \mathcal{J}$ and square $\sigma : Uj \to f$ we have a lift $\phi_{j}(\sigma)$ as shown in the following diagram 
\begin{equation} \label{one_step_lift}
\xymatrix{
    A_j \ar[d]_{Uj} \ar[r]^{\sigma_0} & X \ar[d]^(.55){f} \ar[r]^{u_0} & C \ar[d]^g \\
    B_j \ar@{-->}[urr]^<<<<<<<<<{\phi_j(\sigma)} \ar[r]_{\sigma_1} & Y \ar[r]_{u_1} & D
    }
\end{equation}
which moreover satisfies the horizontal condition: that is, for any $\alpha : i \to j$ in $\mathcal{J}$ we have $\phi_i(\sigma \circ U\alpha) = \phi_j(\sigma) \circ (U\alpha)_1$. 
\end{defi}

\begin{ex}\label{1step}
The square $\eta_f = (Kf,1) : f \to Tf$ has a one-step lifting operation $\theta_j(\sigma) :=  q_f \circ (\iota_{j,\sigma})_1$ as depicted below
\[
\xymatrix{
A_j \ar[d]_{Uj} \ar[r]^{(\iota_{j,\sigma})_0} & A \ar[d]^{Cf} \ar[r]^{(\varepsilon_f)_0} & X \ar[d]^(.6){f} \ar[r]^{Kf} & Sf \ar[d]^{Tf} \\
B_j \ar[r]_{(\iota_{j,\sigma})_1} & B \ar@{-->}[urr]^<<<<<<<<<<{q_f} \ar[r]_{(\varepsilon_f)_1} & Y \ar[r]_{1} & Y. 
}
\]
where $q_f$ is as in diagram \eqref{def_T}.  Note that the horizontal compatibility condition follows easily from the fact that the colimit cocone components $\iota_{j,\sigma} \colon Uj \to Cf$ are natural in morphisms of $U \downarrow f$.
\end{ex}

These $1$-step lifting operations assemble naturally into a presheaf $$\mathcal{J}\textnormal{-1-Step} \colon ({\mathcal{C}^{\2b}})^{op} \times \mathcal{C}^{\2b} \to \Set$$ which sends $(f,g)$ to the set of $1$-step lifting operations from $f$ to $g$.  This comes equipped with a natural transformation $V \colon \mathcal{J}\textnormal{-1-Step} \to \mathcal{C}^{\2b}(-,-)$ which forgets the lifting operation.

\begin{thm} \label{univ_prop_T}
We have an isomorphism $\kappa_{f,g} \colon \mathcal{C}^{\2b}(Tf,g) \cong \mathcal{J}\textnormal{-1-Step}(f,g)$ natural in $g$.
\end{thm}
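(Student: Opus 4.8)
The plan is to define $\kappa_{f,g}$ by transporting the canonical one-step lifting structure $\theta$ of Example \ref{1step} along a morphism $(a,b) \colon Tf \to g$ of $\mathcal{C}^{\2b}$, and then to build its inverse directly from the two universal properties used in the construction of $Tf$: the colimit defining $Cf$ and the pushout defining $Sf$. Concretely, to a morphism $(a,b) \colon Tf \to g$ I assign the pair $\kappa_{f,g}(a,b) := (u,\phi)$ with $u := (a,b) \circ \eta_f \colon f \to g$ and $\phi_j(\sigma) := a \circ \theta_j(\sigma) = a \circ q_f \circ (\iota_{j,\sigma})_1$. Since postcomposing the lift $\theta_j(\sigma)$ with $a$ preserves both triangles of diagram \eqref{one_step_lift}, and the horizontal condition is inherited from that of $\theta$, this is a well-defined one-step lifting structure from $f$ to $g$.

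For the inverse, I start from a one-step lifting structure $(u,\phi)$ and set $b := u_1$. The horizontal compatibility of $\phi$ states exactly that the family $(\phi_j(\sigma) \colon B_j \to C)_{(j,\sigma)}$ is a cocone over the codomain of the diagram $U \downarrow f \to \mathcal{J} \to \mathcal{C}^{\2b}$; hence it induces a unique map $\bar\phi \colon B \to C$ with $\bar\phi \circ (\iota_{j,\sigma})_1 = \phi_j(\sigma)$, where $B = \cod(Cf)$ is the corresponding pointwise colimit. I then check that $\bar\phi \circ Cf = u_0 \circ (\varepsilon_f)_0$ by precomposing with each colimit inclusion $(\iota_{j,\sigma})_0$: the left side becomes $\phi_j(\sigma) \circ Uj$, which by the upper triangle of \eqref{one_step_lift} equals $u_0 \circ \sigma_0 = u_0 \circ (\varepsilon_f)_0 \circ (\iota_{j,\sigma})_0$ using $\sigma = \varepsilon_f \circ \iota_{j,\sigma}$ from \eqref{epsilon}, and the inclusions $(\iota_{j,\sigma})_0$ are jointly epic. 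The universal property of the pushout \eqref{def_T} then yields a unique $a \colon Sf \to C$ with $a \circ Kf = u_0$ and $a \circ q_f = \bar\phi$. Finally, $(a,u_1)$ is a morphism $Tf \to g$: the identity $g \circ a = u_1 \circ Tf$ is verified on the two pushout legs, using $Tf \circ Kf = f$ together with the square $u$ on the $Kf$-leg, and using $Tf \circ q_f = (\varepsilon_f)_1$ together with the lower triangle of \eqref{one_step_lift} on the $q_f$-leg.

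It remains to see that the two assignments are mutually inverse and natural. Both composites reduce to uniqueness statements: one direction uses that the $(\iota_{j,\sigma})_1$ are jointly epic to conclude $\bar\phi = a \circ q_f$, and that a map out of the pushout $Sf$ is determined by its restrictions along $Kf$ and $q_f$; the other direction reads off $u_0 = a \circ Kf$ and $\phi_j(\sigma) = a \circ q_f \circ (\iota_{j,\sigma})_1$ directly from the definitions. Naturality in $g$ is immediate, since for $h \colon g \to g'$ both $\kappa_{f,g'}(h \circ (a,b))$ and $h_*(\kappa_{f,g}(a,b))$ have underlying square $h \circ u$ and lift $h_0 \circ \phi_j(\sigma)$.

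The main obstacle is really the bookkeeping in the inverse construction: recognising that the pushout defining $Sf$ is engineered precisely so that a map $Sf \to C$ decomposes into its value $u_0$ on $X$ together with the ``lifting cocone'' $\bar\phi$ on $B$, glued along the single compatibility condition on $A$ mediated by $\varepsilon_f$, and identifying the horizontal compatibility condition on $\phi$ with the cocone condition over $U \downarrow f$ that makes $\bar\phi$ exist. Once these two identifications are in place, every verification is a routine diagram chase using joint epimorphy of the colimit and pushout cocones.
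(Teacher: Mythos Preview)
Your proof is correct and essentially complete; the verifications you sketch all go through exactly as indicated.

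Your approach differs from the paper's in a meaningful way. The paper argues more abstractly inside $\mathcal{C}^{\2b}$: it reformulates a lift of a square as a factorisation of $u \colon f \to g$ through $(1,g) \colon 1_C \to g$, identifies a 1-step lifting operation with a cocone $Uj \to 1_C$ over $U \downarrow f$, uses the colimit universal property of $Cf$ to obtain a single morphism $Cf \to 1_C$, and then invokes the orthogonal factorisation system (pushout squares, domain-invertible morphisms) on $\mathcal{C}^{\2b}$ to produce the unique filler $Kf \to 1_C$. You instead work componentwise: you use the codomain-level colimit of $Cf$ to assemble the $\phi_j(\sigma)$ into a single $\bar\phi \colon B \to C$, verify the one compatibility needed at the domain level, and then invoke the pushout defining $Sf$ directly to obtain $a$. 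Your route is more elementary --- it avoids the orthogonality argument entirely --- and makes the inverse construction very concrete, at the price of slightly more diagram chasing. The paper's route packages the two universal properties into a single lifting step and makes the naturality in $g$ more automatic, but requires the reader to recall the factorisation system introduced just after \eqref{def_T}.
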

\begin{proof}
To prove the result, we will show that $(Kf,1) : f \to Tf$ equipped with the $1$-step lifting operation of Example~\ref{1step} is the universal such lifting operation.  That is, given $(u,\phi) \in \mathcal{J}\textnormal{-1-Step}(f,g)$, we must show that there exists a unique morphism $t : Sf \to C$ making the following diagram commute
    \begin{equation} \label{unique_map}
    \xymatrix{
    X \ar[d]_{f} \ar@/^1.5pc/[rr]^{u_0} \ar[r]_{Kf} & Sf \ar[d]_{Tf} \ar@{-->}[r]_{t} & C \ar[d]^{g} \\
    Y \ar[r]_{1} & Y \ar[r]_{u_1} & D
    }
    \end{equation} 
 and which also commutes with the lifting operations, in the sense that $t \circ \theta_j(\sigma) = \phi_j(\sigma)$ for all $j \in \mathcal{J}$ and $\sigma : Uj \to f$. 
 
 In order to prove this cleanly, let us begin by reformulating the notion of a $1$-step lifting operation as a structure internal to the category of arrows.  Firstly, given a morphism $g \colon C \to D$, observe that we have the morphism $(1_C,g) \colon 1_C \to g \in \mathcal{C}^{\2b}$, as depicted below left. 
 
\[\begin{tikzcd}
	C & C & {} & A & C && A & C & C &&& {1_C} \\
	C & D && B & D && B & C & D && f & g
	\arrow["{1_C}", from=1-1, to=1-2]
	\arrow["{1_C}"', from=1-1, to=2-1]
	\arrow["g", from=1-2, to=2-2]
	\arrow["{u_0}", from=1-4, to=1-5]
	\arrow["f"', from=1-4, to=2-4]
	\arrow["g", from=1-5, to=2-5]
	\arrow["{u_0}", from=1-7, to=1-8]
	\arrow["{u_0}", curve={height=-24pt}, from=1-7, to=1-9]
	\arrow["f"', from=1-7, to=2-7]
	\arrow["{1_C}", from=1-8, to=1-9]
	\arrow["{1_C}", from=1-8, to=2-8]
	\arrow["g", from=1-9, to=2-9]
	\arrow["{(1,g)}", from=1-12, to=2-12]
	\arrow["g"', from=2-1, to=2-2]
	\arrow["t"{description}, dashed, from=2-4, to=1-5]
	\arrow["{u_1}"', from=2-4, to=2-5]
	\arrow["t"', dashed, from=2-7, to=2-8]
	\arrow["{u_1}"', curve={height=24pt}, from=2-7, to=2-9]
	\arrow["g"', from=2-8, to=2-9]
	\arrow["{(u_0,t)}", dashed, from=2-11, to=1-12]
	\arrow["u"', from=2-11, to=2-12]
\end{tikzcd}\]

Now given $u \colon f \to g$, observe that a lifting $t$ for this square, as in the second diagram, amounts equally to a morphism $t$, as in the third diagram, making that diagram commute.  In other words, liftings are in correspondence with factorisations of $u \colon f \to g$ through $(1,g) \colon 1_C \to g$, as depicted in the fourth diagram.  (Note that a lifting as in the fourth diagram is forced to have $u_0$ in its domain component since $(1,g)$ has identity as its domain component.)

Building on this, a $1$-step lifting operation on $u \colon f \to g$ is equally specified by a lifting as on the upper horizontal in the left diagram below, natural in $j \in \mathcal{J}$.

\[\begin{tikzcd}
	Uj && {1_C} &&& Uj && {1_C} && Uj && {1_C} \\
	&&&&& Cf &&&& Cf \\
	f && g &&& Kf &&&& Kf \\
	&&&&& f && g && f && g
	\arrow["{(u_0 \circ \sigma_0,\phi_j(\sigma))}", dashed, from=1-1, to=1-3]
	\arrow["\sigma"', from=1-1, to=3-1]
	\arrow["{(1,g)}", from=1-3, to=3-3]
	\arrow["{(u_0 \circ \sigma_0,\phi_j(\sigma))}", from=1-6, to=1-8]
	\arrow["{\iota_{j,\sigma}}", from=1-6, to=2-6]
	\arrow["\sigma"', curve={height=30pt}, from=1-6, to=4-6]
	\arrow["{(1,g)}", from=1-8, to=4-8]
	\arrow["{(u_0 \circ \sigma_0,\phi_j(\sigma))}", from=1-10, to=1-12]
	\arrow["{\iota_{j,\sigma}}"', from=1-10, to=2-10]
	\arrow["{(1,g)}", from=1-12, to=4-12]
	\arrow["{(1)}"{description}, dashed, from=2-6, to=1-8]
	\arrow["{\gamma_f}", from=2-6, to=3-6]
	\arrow["{\gamma_f}"', from=2-10, to=3-10]
	\arrow["u"', from=3-1, to=3-3]
	\arrow["{(2)}"{description}, dashed, from=3-6, to=1-8]
	\arrow["{(1,Tf)}", from=3-6, to=4-6]
	\arrow[""{name=0, anchor=center, inner sep=0}, "{(u_0,t)}"{description}, dashed, from=3-10, to=1-12]
	\arrow["{(1,Tf)}"', from=3-10, to=4-10]
	\arrow["u"', from=4-6, to=4-8]
	\arrow["u"', from=4-10, to=4-12]
	\arrow["{(A)}"{description}, draw=none, from=1-10, to=0]
	\arrow["{(B)}"{description}, shift right=4, draw=none, from=0, to=4-12]
\end{tikzcd}\]

Turing to the central diagram, we obtain the composite left vertical by factoring $\sigma$ using \eqref{epsilon} and \eqref{def_T}.  Now observe that naturality in $\mathcal J$ of the upper horizontal morphisms $Uj  \to 1_C$ asserts that they form a cocone, whereby there exists a unique morphism $Cf \to 1_C$ as in (1) above making the upper triangle commute for each $(j,\sigma)$.  Moreover, the universal property of the colimit ensures that the quadrilateral with upper morphism (1) then also commutes, since the outside of the diagram does.  Furthermore, since $\gamma_f$ is a pushout square and $(1,g)$ has identity domain, they are orthogonal.  Therefore there exists a unique diagonal filler as in (2).   

In summary, combining the two cases of the construction so far, we conclude that there exists a unique diagonal filler as in the third diagram, making the regions (A) and $(B)$ commute for all $(j,\sigma)$.  In fact, since the lower verticals on left and right have identity domain, this diagonal must be of the form $(u_0,t)$ for some $t$ --- then the commutativity of (B) says that $t$ is a diagonal filler as below.  

\[\begin{tikzcd}
	X & X & C \\
	Sf & B & D
	\arrow["1_X", from=1-1, to=1-2]
	\arrow["Kf"', from=1-1, to=2-1]
	\arrow["{u_0}", from=1-2, to=1-3]
	\arrow["g", from=1-3, to=2-3]
	\arrow["t"{description}, dashed, from=2-1, to=1-3]
	\arrow["Tf"', from=2-1, to=2-2]
	\arrow["{u_1}"', from=2-2, to=2-3]
\end{tikzcd}\]

The two equalities in the above diagram are two of the three we required at the start of the proof.  Furthermore, the commutativity of the region (A) says that $t \circ q_f \circ (\iota_{j,\sigma})_1 = \phi_{j}(\sigma)$, but since $\theta_{(j,f)} = q_f \circ (\iota_{j,\sigma})_1$ by definition, this says exactly that $t \circ \theta_{(j,f)} = \phi_{j}(\sigma)$, which is the final equality required. \end{proof}

Using the naturality of the bijections $$\kappa_{f,g} \colon \mathcal{C}^{\2b}(Tf,g) \cong \mathcal{J}\textnormal{-1-Step}(f,g)$$ in $g$, the operation $T$ extends uniquely to an endofunctor in such a way that these bijections become natural in both variables.  In terms of the representations via universal $1$-step lifting operations, given  $\alpha \colon f \to g \in \mathcal{C}^{\2b}$, we define $T\alpha \colon Tf \to Tg$ to be the unique morphism such that the square

\[\begin{tikzcd}
	f && g \\
	Tf && Tg
	\arrow["\alpha", from=1-1, to=1-3]
	\arrow["{\eta_f}"', from=1-1, to=2-1]
	\arrow["{\eta_g}", from=1-3, to=2-3]
	\arrow["{T\alpha}"', from=2-1, to=2-3]
\end{tikzcd}\]
commutes and such that $T\alpha \colon Tf \to Tg$ preserves the canonical liftings --- that is, satisfies 
\begin{equation} \label{morph_prop2}
    (T\alpha)_0 \circ \theta_{j}(\sigma) = \theta_{j}(\alpha \circ \sigma).  
\end{equation}

Accordingly the components $\eta_f$ combine to give a natural transformation $\eta \colon 1 \to T$: that is, making $(T,\eta)$ into a \emph{pointed endofunctor}.  Now let $\Tealg$ denote the category of algebras for the pointed endofunctor.

\begin{prop} \label{lifting_alg}
In the above setting, we have an isomorphism $\Tealg \cong \mathcal{J}^\pitchfork$ over $\mathcal{C}^{\2b}$. 
\end{prop}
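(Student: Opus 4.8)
The plan is to read off both the object and the morphism correspondences directly from the universal property established in Theorem~\ref{univ_prop_T}, using the bijections $\kappa_{f,g} \colon \mathcal{C}^{\2b}(Tf,g) \cong \mathcal{J}\textnormal{-1-Step}(f,g)$. Concretely, $\kappa_{f,g}$ sends a morphism $\beta \colon Tf \to g$ to the $1$-step lifting structure whose underlying morphism is $\beta \circ \eta_f \colon f \to g$ and whose lifting operation is $\sigma \mapsto \beta_0 \circ \theta_j(\sigma)$. I would take this explicit description (which is exactly the content of naturality in $g$ applied to the universal structure $(\eta_f,\theta)$) as the computational backbone of the whole proof.

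First I would treat objects. A $T$-algebra structure on $f$ is a morphism $a \colon Tf \to f$ with $a \circ \eta_f = \id_f$. Applying $\kappa_{f,f}$, such $a$ correspond to exactly those elements of $\mathcal{J}\textnormal{-1-Step}(f,f)$ whose underlying morphism $a \circ \eta_f$ is the identity; the algebra unit axiom is precisely this constraint. A $1$-step lifting structure on $f$ with underlying morphism $\id_f$ is, by inspection of \eqref{one_step_lift}, nothing but a family of fillers $\phi_j(\sigma)$ as in \eqref{eq:lifting2} satisfying the horizontal compatibility \eqref{eq:horizontal} --- that is, an object of $\mathcal{J}^\pitchfork$ lying over $f$. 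Thus $T$-algebra structures on $f$ biject with $\mathcal{J}^\pitchfork$-structures on $f$ via $\phi_j(\sigma) = a_0 \circ \theta_j(\sigma)$, and this assignment manifestly commutes with the forgetful functors to $\mathcal{C}^{\2b}$.

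Next I would treat morphisms. Let $(f,a)$ and $(g,b)$ correspond to $(f,\phi)$ and $(g,\psi)$. A $T$-algebra morphism is a map $h \colon f \to g$ in $\mathcal{C}^{\2b}$ with $h \circ a = b \circ Th$, while a morphism of $\mathcal{J}^\pitchfork$ is a square $h$ satisfying $h_0 \circ \phi_j(\sigma) = \psi_j(h \circ \sigma)$. Since $\kappa_{f,g}$ is a bijection, I would show $h \circ a = b \circ Th$ holds iff $\kappa_{f,g}(h \circ a) = \kappa_{f,g}(b \circ Th)$ and then compute both sides. Using the unit axioms of the two algebras together with the naturality square $Th \circ \eta_f = \eta_g \circ h$, both structures are found to have underlying morphism $h$; their lifting operations come out as $\sigma \mapsto h_0 \circ \phi_j(\sigma)$ on the left and, via the functoriality identity \eqref{morph_prop2} in the form $(Th)_0 \circ \theta_j(\sigma) = \theta_j(h \circ \sigma)$, as $\sigma \mapsto \psi_j(h\circ\sigma)$ on the right. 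Hence the two $1$-step structures agree iff $h_0 \circ \phi_j(\sigma) = \psi_j(h\circ\sigma)$ for all $j$ and $\sigma$, which is exactly the $\mathcal{J}^\pitchfork$ compatibility. Assembling, the functor $(f,a)\mapsto (f,\phi)$ is the identity on underlying squares, bijective on objects and fully faithful, hence an isomorphism over $\mathcal{C}^{\2b}$.

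I expect the morphism correspondence to be the main obstacle: the content is not the bookkeeping of the square $h$ but verifying that the single equation $h\circ a = b\circ Th$ in $\mathcal{C}^{\2b}(Tf,g)$ unpacks exactly into the pointwise lifting compatibility. The clean way to do this is to push both sides through the bijection $\kappa_{f,g}$ rather than to argue with the pushout presentation of $Tf$ directly; the two ingredients that make the underlying morphisms on both sides collapse to $h$ are the unit axioms and the naturality of $\eta$, while \eqref{morph_prop2} is precisely what aligns the two lifting operations.
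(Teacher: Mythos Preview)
Your proof is correct and rests on the same idea as the paper's: both extract the isomorphism from the natural bijection $\kappa_{f,g}$ of Theorem~\ref{univ_prop_T} by restricting to those $1$-step lifting structures whose underlying morphism is an identity. The difference is purely in packaging. The paper observes that $\kappa$ lies over $\mathcal{C}^{\2b}(-,-)$, passes to categories of elements to obtain an isomorphism $T\downarrow\mathcal{C}^{\2b} \cong \el(\mathcal{J}\textnormal{-1-Step})$ over $(\mathcal{C}^{\2b})^{\2b}$, and then pulls back along the identities functor $I \colon \mathcal{C}^{\2b} \to (\mathcal{C}^{\2b})^{\2b}$; the morphism correspondence is thereby handled silently by the naturality of $\kappa$ in both variables. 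You instead unpack this directly, computing $\kappa_{f,g}(h \circ a)$ and $\kappa_{f,g}(b \circ Th)$ by hand and invoking \eqref{morph_prop2} and the naturality of $\eta$ to see that the $T$-algebra morphism equation matches the $\mathcal{J}^\pitchfork$ compatibility. Your route is more explicit and self-contained; the paper's is terser but relies on the reader recognising the category-of-elements manoeuvre.
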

\begin{proof}
Note that, by construction of the natural isomorphism $\kappa$, the triangle below left commutes.
\begin{equation*}
\xymatrix{\mathcal{C}^{\2b}(T-,-) \ar[dr]_{- \circ \eta} \ar[r]^{\kappa}& \mathcal{J}\textnormal{-1-Step} \ar[d]^{V} \\
& \mathcal{C}^{\2b}(-,-)}
\hspace{2cm}
\xymatrix{T\downarrow\mathcal{C}^{\2b} \ar[dr]_{- \circ \eta} \ar[r]^--{el(\kappa)}& el(\mathcal{J}\textnormal{-1-Step}) \ar[d]^{el(V)} \\
& (\mathcal{C}^{\2b})^{\2b}}
\end{equation*}
Taking categories of elements, this therefore yields a natural isomorphism $el(\kappa)$ making the triangle above right commute where $T \downarrow \mathcal{C}^{\2b}$ is the comma category, and  $- \circ \eta \colon T\downarrow\mathcal{C}^{\2b} \to \mathcal{C}^{\2b} \downarrow \mathcal{C}^{\2b} = (\mathcal{C}^{\2b})^{\2b}$ sends $(f,\alpha \colon Tf \to g)$ to $(f,\alpha \circ \eta_f \colon f \to g)$.

Now pulling back the isomorphism $el(\kappa)$ along the identities map $I \colon \mathcal{C}^{\2b} \to (\mathcal{C}^{\2b})^{\2b} \colon f \mapsto id_f$ gives precisely the isomorphism $\Tealg \cong \mathcal{J}^\pitchfork$ over $\mathcal{C}^{\2b}$.  Indeed, an object of the pullback of $T\downarrow\mathcal{C}^{\2b}$ is a morphism $\alpha \colon Tf \to f$ for which $\alpha \circ \eta_f = 1$ --- that is, a $T$-algebra --- whilst a $1$-step lifting operation on $1 \colon f \to f$ equips $f$ with $\mathcal{J}^\pitchfork$-structure.
\end{proof}

\begin{rem}\label{rem:beta}
Explicitly, we note that given a lifting structure $(f,\phi)$, the corresponding $T$-algebra of the isomorphism in Proposition \ref{lifting_alg} is constructed as follows. By the universal property of $(Kf,1) : f \to Tf$ from Theorem \ref{univ_prop_T} we have a unique map $\beta_0: Sf \to X$ making the following diagram commute
    \begin{equation} \label{def_beta}
    \xymatrix{
        A_j \ar[d]_{Uj} \ar[r]^{\sigma_0} & X \ar[d]^{f} \ar[r]_{Kf} \ar@/^1.5pc/[rr]^{1} & Sf \ar[d]^{Tf} \ar@{-->}[r]_{\beta_0} & X \ar[d]^{f} \\
        B_j \ar[r]_{\sigma_1} & Y \ar[r]^{1} \ar@/_1.5pc/[rr]_{1_Y} & Y \ar[r]^{1_Y} & Y
        }
    \end{equation}
    and having the property that $\beta_0 \circ \theta_j(\sigma) = \phi_j(\sigma)$.   This defines the corresponding $T$-algebra $\beta = (\beta_0, 1) : Tf \to f$.  Conversely, given a $T$-algebra $(f, \beta : Tf \to f)$, the induced lifting operation is defined by $\phi_j(\sigma) := \beta_0 \circ \theta_j(\sigma)$ for a lifting problem $\sigma : Uj \to f$. (Note that because $\eta_f = (Kf,1_Y)$, the second component of any $T$-algebra structure on $f$ always equals $1_Y$; hence $\beta = (\beta_0,1_Y)$.)
\end{rem}

We now turn to the question of which colimits the endofunctor $T$ preserves.

\begin{prop} \label{prop:T_finitary}
$T$ preserves any filtered colimit which is preserved by each hom-functor $\mathcal C(UA,-)$ and $\mathcal C(UB,-)$ for $j:A \to B \in \mathcal J$.
\end{prop}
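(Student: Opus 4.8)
The plan is to exhibit $T$ as assembled from three colimit constructions---the density comonad $C$, the pushout defining $Sf$, and the map $Tf$ induced out of that pushout---and to verify that each preserves the given filtered colimit. Fix a filtered diagram $(f_i)_{i \in \mathcal I}$ in $\mathcal{C}^{\2b}$ with colimit $f$, say $f_i \colon X_i \to Y_i$ and $f \colon X \to Y$. Colimits in $\mathcal{C}^{\2b}$ are computed pointwise, so $X = \colim_i X_i$ and $Y = \colim_i Y_i$, and by hypothesis these underlying colimits in $\mathcal C$ are preserved by each $\mathcal C(UA,-)$ and $\mathcal C(UB,-)$ for $j\colon A \to B \in \mathcal J$.

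The first step is to show that $\mathcal{C}^{\2b}(Uj,-)$ preserves the colimit for every $j \in \mathcal J$. Writing $Uj \colon UA \to UB$, a square $Uj \to f$ is a pair $(\sigma_0 \colon UA \to X, \sigma_1 \colon UB \to Y)$ satisfying $f \circ \sigma_0 = \sigma_1 \circ Uj$, so that $\mathcal{C}^{\2b}(Uj,f)$ is the pullback of $\mathcal C(UA,X) \to \mathcal C(UA,Y) \leftarrow \mathcal C(UB,Y)$ in $\Set$. Each of the three hom-functors appearing preserves the colimit by hypothesis---note that preservation of $\mathcal C(UA,Y)$, the base of the pullback, is exactly the hypothesis on $\mathcal C(UA,-)$ applied to $Y = \colim_i Y_i$---and since filtered colimits commute with finite limits in $\Set$ we conclude $\mathcal{C}^{\2b}(Uj,f) = \colim_i \mathcal{C}^{\2b}(Uj,f_i)$.

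Next I would deduce that $C$ preserves the colimit. Using the coend description $Cf = \int^{j \in \mathcal J} \mathcal{C}^{\2b}(Uj,f) \cdot Uj$ of the density comonad, each copower $(-) \cdot Uj$ is a left adjoint and the coend is itself a colimit, so both commute with the filtered colimit over $i$; combined with the previous step this yields $Cf \cong \colim_i Cf_i$ naturally, and in particular the maps $(\varepsilon_f)_0$ and $Cf$ are the filtered colimits of $(\varepsilon_{f_i})_0$ and $Cf_i$ by naturality of $\varepsilon$ and $C$. Since $Sf$ is the pushout of $(\varepsilon_f)_0 \colon \dom(Cf) \to X$ along $Cf \colon \dom(Cf) \to \cod(Cf)$, and a pushout is a colimit whose inputs are here the filtered colimits of the corresponding inputs for the $f_i$, we obtain $Sf \cong \colim_i Sf_i$. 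Finally, $Tf \colon Sf \to Y$ is the unique map from this pushout determined by $f \colon X \to Y$ and $(\varepsilon_f)_1 \colon \cod(Cf) \to Y$; the comparison $\colim_i Tf_i \colon \colim_i Sf_i \to \colim_i Y_i$ satisfies the same universal property under the identifications $\colim_i Sf_i = Sf$ and $\colim_i Y_i = Y$, so by uniqueness $Tf = \colim_i Tf_i$. As colimits in $\mathcal{C}^{\2b}$ are pointwise and we have matched domain, codomain and connecting arrow, this gives $T(\colim_i f_i) \cong \colim_i Tf_i$, as required.

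The step demanding the most care is the preservation by $C$: one must ensure that the coend and the copowers genuinely commute with the filtered colimit and, above all, that the computation of $\mathcal{C}^{\2b}(Uj,f)$ invokes only the hom-functors assumed to preserve it. The restriction to filtered colimits is essential precisely at the point where filtered colimits must commute with the finite pullback computing $\mathcal{C}^{\2b}(Uj,f)$; everything downstream is then a formal consequence of colimits commuting with colimits.
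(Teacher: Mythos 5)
Your proposal is correct and follows essentially the same route as the paper: express $C$ via the coend formula $Cf \cong \int^{j} \mathcal{C}^{\2b}(Uj,f)\cdot Uj$, reduce preservation to that of the hom-functors $\mathcal{C}^{\2b}(Uj,-)$ (which you justify, correctly, by writing the hom-set as a finite limit of the hypothesised hom-functors and using that filtered colimits commute with finite limits in $\Set$), and then pass through the pushout defining $S$ and the induced map $Tf$. The only difference is presentational: the paper phrases the last stage in terms of the functors $\dom\circ T = S$ and $\cod\circ T = \cod$ jointly reflecting colimits rather than assembling the comparison map by hand, but the content is the same.
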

\begin{proof}
Since $C=lan_U(U)$, we have   
 \begin{equation} \label{coend_formula}
    Cf \cong \int^{j \in \mathcal{J}} \mathcal{C}^2(Uj,f) \cdot Uj, 
\end{equation}
so that $C$ is the coend of the composite functors $- \cdot Uj \circ \mathcal{C}^2(Uj,-)$.  Since coends commute with all colimits, therefore $C$ preserves any colimit preserved by each such composite.  And since the copowering functor of the second component $- \cdot Uj$ is cocontinuous, it follows that $C$ preserves any colimits preserved by each $\mathcal{C}^2(Uj,-)$.  Now by the description of filtered colimits in $\Set$, observe that $\mathcal C^{\2b}(Uj,-)$ preserves any filtered colimit preserved by each $\mathcal C(UA,-)$ and $\mathcal C(UB,-)$.  Therefore $C$ preserves any filtered colimits preserved by each $\mathcal C(UA,-)$ and $\mathcal C(UB,-)$ as above.

Turning to $T$, we first note that  since colimits are pointwise in $\mathcal{C}^{\2b}$, the domain and codomain functors $dom, cod \colon \mathcal{C}^{\2b} \to \mathcal C$ preserve and joint reflect colimits.  Therefore the two functors $dom \circ C$ and $cod \circ C$ preserve any colimits preserved by $C$.  Now by its construction in \eqref{def_T}, $S \colon \mathcal{C}^{\2b} \to \mathcal{C}$ is a pushout of these two functors and $dom$ and therefore preserves any colimits preserved by each of these three.  Hence $dom \circ T = S$ and $cod \circ T = cod$ both preserve any filtered colimit preserved by each $\mathcal C(UA,-)$ and $\mathcal C(UB,-)$, whence so does $T$.
\end{proof}

\subsection{Cofibrant generation by a small category of morphisms}

With this in place, we can re-derive Garner's result about cofibrant generation of awfs generated by small categories of morphisms.\begin{footnote}{The assumptions in Theorem \ref{thm:categories} are less restrictive than those in Theorem 4.4 of \cite{Garner2011Understanding}, which assumes either local presentability or local boundedness.   However those in \cite{Garner2011Understanding} are simply a convenient choice of assumptions and the proof given there applies equally under the present assumptions.}\end{footnote}

\begin{thm}\label{thm:categories}[Theorem 4.4 of \cite{Garner2011Understanding}]
Let $\mathcal{C}$ be a cocomplete locally small category, $\mathcal J$ be a small category and consider $U : \mathcal J \to \mathcal{C}^{\2b}$.   If there exists a limit ordinal $\alpha$ such that either
\begin{enumerate}
\item  for each $j \in \mathcal J$, the domain and codomain of $Uj$ are $\alpha$-small; or
\item  for each $j \in \mathcal J$, the domain and codomain of $Uj$ are $(\alpha,M)$-small with respect to some proper co-well-powered factorisation system $(E,M)$ on $\mathcal C$,
\end{enumerate}
then the awfs cofibrantly generated by $\mathcal J$ exists.
\end{thm}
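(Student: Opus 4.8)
The plan is to reduce the existence of the awfs to the existence of free algebras for the pointed endofunctor $(T,\eta)$ of~\eqref{def_T}, and then to establish that existence by the algebraic chain machinery of Theorem~\ref{thm:chains}.

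First I would record the reduction. By Proposition~\ref{lifting_alg} there is an isomorphism $\Tealg \cong \mathcal{J}^\pitchfork$ over $\mathcal{C}^{\2b}$. Since cofibrant generation by the category $\mathcal J$ is subsumed by the double-categorical case --- passing to the free internal category $D(\mathcal J)$, for which $\mathcal{J}^\dpf \cong D(\mathcal J)^\dpf$ and $\mathcal{J}^\pitchfork$ is the category $\mathcal{J}^\dpf_1$ of vertical arrows and squares --- Theorem~\ref{thm:doublecof} shows that the awfs cofibrantly generated by $\mathcal J$ exists precisely when the forgetful functor $\mathcal{J}^\pitchfork \to \mathcal{C}^{\2b}$ admits a left adjoint. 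Transporting along the isomorphism, which lives over $\mathcal{C}^{\2b}$, this is the forgetful functor $\Tealg \to \mathcal{C}^{\2b}$, so it suffices to construct free $(T,\eta)$-algebras.

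Next I would apply Theorem~\ref{thm:chains} to the pointed endofunctor $T$ on $\mathcal{C}^{\2b}$, which is cocomplete because colimits there are computed pointwise. That theorem builds the free $T$-algebra as the stabilisation point of the algebraic chain, provided $T$ preserves colimits of $\alpha$-chains, or of $(\alpha,M)$-chains for a suitable factorisation system. For hypothesis~(1) I would argue as follows: for a limit ordinal $\alpha$ the colimit of an $\alpha$-chain is filtered, and $\alpha$-smallness of the domains and codomains of the $Uj$ says exactly that each $\mathcal{C}(UA,-)$ and $\mathcal{C}(UB,-)$ (with $j\colon A \to B$ in $\mathcal J$) preserves such colimits; Proposition~\ref{prop:T_finitary} then gives that $T$ preserves colimits of $\alpha$-chains, so Theorem~\ref{thm:chains}(a) applies and free $T$-algebras exist.

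Hypothesis~(2) follows the same pattern but is where I expect the main work to lie. I would first transport the proper, co-well-powered factorisation system $(E,M)$ on $\mathcal C$ to the componentwise one $(E',M')$ on $\mathcal{C}^{\2b}$, checking that it remains proper (a componentwise epi, resp. mono, is an epi, resp. mono, in $\mathcal{C}^{\2b}$) and co-well-powered (an $E'$-quotient of $f$ is a compatible pair of $E$-quotients of $\dom f$ and $\cod f$, of which there is only a set). An $(\alpha,M')$-chain in $\mathcal{C}^{\2b}$ restricts under $\dom$ and $\cod$ to a pair of $(\alpha,M)$-chains in $\mathcal C$, whose colimits are preserved by $\mathcal{C}(UA,-)$ and $\mathcal{C}(UB,-)$ by $(\alpha,M)$-smallness; since these colimits are again filtered, the argument of Proposition~\ref{prop:T_finitary} applies unchanged to show that $T$ preserves colimits of $(\alpha,M')$-chains, whence Theorem~\ref{thm:chains}(b) applies. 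The delicate points are precisely these last two: confirming that the induced factorisation system on $\mathcal{C}^{\2b}$ inherits the hypotheses required by Theorem~\ref{thm:chains}, and that Proposition~\ref{prop:T_finitary}, phrased for filtered colimits in general, is genuinely being applied to the $(\alpha,M')$-chains at hand. In either case free $T$-algebras exist, and by the reduction above the awfs cofibrantly generated by $\mathcal J$ is obtained.
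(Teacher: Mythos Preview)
Your proposal is correct and follows essentially the same approach as the paper: reduce via Theorem~\ref{thm:doublecof} and Proposition~\ref{lifting_alg} to constructing free $(T,\eta)$-algebras, then invoke Proposition~\ref{prop:T_finitary} and Theorem~\ref{thm:chains}. You are in fact more careful than the paper's own proof of this theorem, which glosses over the need to transport $(E,M)$ to a componentwise factorisation system on $\mathcal{C}^{\2b}$ before applying Theorem~\ref{thm:chains}(b); the paper only spells out the analogous check later, in the appendix proof of Theorem~\ref{thm:chains2}.
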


\begin{proof}
Combining Theorem~\ref{thm:doublecof} and Proposition \ref{lifting_alg}, this is the case if and only if $U \colon \Tealg \to \mathcal C^{\2b}$ has a left adjoint.  Assuming (1), by Proposition ~\ref{prop:T_finitary}, $T$ then preserves colimits of $\alpha$-chains whilst, assuming (2), the same result ensures that $T$ preserves colimits of $(\alpha,M)$-chains.  The claimed result now holds by the transfinite construction of free algebras on pointed endofunctors described in Theorem~\ref{thm:chains}.
\end{proof}

\section{Cofibrant generation by a small double category of morphisms}\label{sect:double}

This section contains our main result, Theorem \ref{thm:main}, about awfs cofibrantly generated by small double categories.  It improves that of Bourke and Garner \cite{Bourke2016Accessible} by removing the local presentability assumption and by giving an explicit construction via a small object argument.  We compare our construction with that of Garner in Remark \ref{rem:explicit} before establishing a constructive version in Theorem \ref{thm:constructivemain}.


\subsection{Encoding double-categorical lifting properties using special algebras}\label{sect:encode}

Let $\mathcal{C}$ be a cocomplete locally small category and consider a double functor $U : \J \to \Sq(\mathcal{C})$ with $\J$ small.  
Now let  $\mathcal{J}_1$ be the category of vertical arrows and squares in $\J$ and let $\mathcal{J}_2 = \mathcal{J}_1 \times_{
\mathcal{J}_0} \mathcal{J}_1$ be the category of composable pairs of vertical arrows, and composable squares between them. This comes with a composition functor $m : \mathcal{J}_2 \to \mathcal{J}_1$, which gives the following commutative triangle
\[
\xymatrix{
\mathcal{J}_2 \ar[rd]_{U_2} \ar[rr]^m & & \mathcal{J}_1 \ar[ld]^{U_1} \\
& \mathcal{C}^2 & 
}
\]
with $U_2 = U_1 m$. Now applying Proposition \ref{lifting_alg} twice, we obtain a pair of pointed endofunctors $(T_1,
\eta_1)$ and $(T_2, \eta_2)$ such that $\mathcal{J}_1^\pitchfork \cong \Toalg$ and $\mathcal{J}_2^\pitchfork \cong \Ttwalg$. 

Following Example \ref{1step}, $(\eta_1)_f = (K_1f,1) \colon f \to T_1 f$ comes equipped with a universal $1$-step lifting operation against $\mathcal{J}_1$ denoted by $(j,\sigma) \mapsto \theta_j(\sigma)$.  Likewise, $(\eta_2)_f = (K_2f,1) \colon f \to T_2 f$ has a universal $1$-step lifting operation against $\mathcal{J}_2$ denoted by $((i,j),\sigma) \mapsto \theta_{(i,j)}(\sigma)$, where now $(i,j)$ is a composable pair of vertical morphisms in $\J$.

Observe furthermore that $(\eta_1)_f\colon f \to T_1 f$ has a one-step lifting operation against $\mathcal{J}_2$, which acts first by composing the two vertical arrows and then lifting, as depicted below

\[\begin{tikzcd}
	\bullet & \bullet & \bullet \\
	\bullet \\
	\bullet & \bullet & \bullet
	\arrow["{\sigma_0}", from=1-1, to=1-2]
	\arrow["Ui"', from=1-1, to=2-1]
	\arrow["{K_1f}", from=1-2, to=1-3]
	\arrow["f"{pos=0.7}, from=1-2, to=3-2]
	\arrow["{T_1f}"{description}, from=1-3, to=3-3]
	\arrow["Uj"', from=2-1, to=3-1]
	\arrow["{\theta_{j \cdot i}(\sigma)}"{pos=0.4}, dashed, from=3-1, to=1-3]
	\arrow["{\sigma_1}"', from=3-1, to=3-2]
	\arrow["1"', from=3-2, to=3-3]
\end{tikzcd}\]

Therefore by Theorem \ref{univ_prop_T} we get a unique morphism $\gamma_f : T_2 f \to T_1 f$ such that $\gamma_f \circ (\eta_2)_f  = (\eta_1)_f$ 
%
and which commutes with the lifting operation, i.e.
\begin{equation} \label{def_gamma}
\theta_{j \cdot i}(\sigma) = (\gamma_f)_0 \circ \theta_{(i,j)}(\sigma)
\end{equation}
for all composable pairs $(i,j)$ in $\mathcal{J}_2$ and $\sigma : U(j \cdot i) \to f$.  It follows from the uniqueness of the $\gamma_f$ that they form the components of a natural transformation $\gamma : T_2 \Rightarrow T_1$. 

The composite square $(\eta_1)_{T_1 f} \circ (\eta_1)_f \colon f \to T_1 f \to T_1T_1f$ also comes equipped with a one-step lifting structure for $f$ against $\mathcal{J}_2$, namely by first lifting against $Ui$ and then against $Uj$ as depicted below:
\[\begin{tikzcd}
	\bullet & \bullet & \bullet & \bullet \\
	\bullet \\
	\bullet & \bullet & \bullet & \bullet
	\arrow["{\sigma_0}", from=1-1, to=1-2]
	\arrow["Ui"', from=1-1, to=2-1]
	\arrow["{K_1f}", from=1-2, to=1-3]
	\arrow["f", from=1-2, to=3-2]
	\arrow["{K_1T_1 f}", from=1-3, to=1-4]
	\arrow["{T_1f}"{description}, from=1-3, to=3-3]
	\arrow["{T_1 T_1 f}", from=1-4, to=3-4]
	\arrow["{\theta_i}"{pos=0.3}, dashed, from=2-1, to=1-3]
	\arrow["Uj"', from=2-1, to=3-1]
	\arrow["{\theta_j}"'{pos=0.8}, dashed, from=3-1, to=1-4]
	\arrow["{\sigma_1}"', from=3-1, to=3-2]
	\arrow["1"', from=3-2, to=3-3]
	\arrow["1"', from=3-3, to=3-4]
\end{tikzcd}\]
where we have omitted certain labels for readability --- in symbols, the lifting operation sends $((i,j),\sigma) \mapsto  \theta_{j}(\theta_{i}(\sigma_0,\sigma_1 \circ Uj), \sigma_1)$.



Therefore, by the universal property of $T_2f$ we get a unique morphism $\lambda_f : T_2f \to T_1T_1f$ such that $\lambda_f \circ (\eta_2)_f  = (K_1T_1f \circ K_1f,1)$ 
%
and which commutes with the lifting operations: 
\begin{equation} \label{def_lambda}
\theta_j(\theta_i(\sigma_0,\sigma_1 \circ Uj), \sigma_1) = (\lambda_f)_0 \circ \theta_{(i,j)}(\sigma). 
\end{equation}
Again, these form the components of a natural transformation $\lambda : T_2 \Rightarrow T_1 T_1$. 

Now let $\beta : T_1f \to f$ be a $T_1$-algebra, and consider the following commutative diagram 
\begin{equation} \label{special_algebra}
\xymatrix{
T_2 f \ar[d]_{\lambda_f} \ar[rr]^{\gamma_f} & & T_1f \ar[d]^{\beta} \\
T_1T_1f \ar[r]_{T_1\beta} & T_1f \ar[r]_{\beta} & f 
}
\end{equation}
where $\gamma$ and $\lambda$ are as defined above. In the next proposition we prove that $(\mathbb{J}^\dpf)_1$ is isomorphic to the category of $T_1$-algebras that satisfy condition $\eqref{special_algebra}$. 

\begin{prop} \label{lifting_alg_double}
The isomorphism $\mathcal{J}_1^\pitchfork \cong \Toalg$ over $\mathcal{C}^{\2b}$ restricts to an isomorphism between $(\mathbb{J}^\dpf)_1$ and the full subcategory of $\Toalg$ containing those $T_1$-algebras that satisfy condition $\eqref{special_algebra}$.
\end{prop}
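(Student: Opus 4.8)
The plan is to realise both $(\mathbb{J}^\dpf)_1$ and the category of \eqref{special_algebra}-satisfying $T_1$-algebras as full subcategories of the two sides of the already-established isomorphism $\mathcal{J}_1^\pitchfork \cong \Toalg$, and then to match them objectwise. First I would unwind the definitions to see that $(\mathbb{J}^\dpf)_1$ is precisely the full subcategory of $\mathcal{J}_1^\pitchfork$ on those lifting operations $\phi$ that additionally satisfy the vertical compatibility condition \eqref{eq:vertical}. Indeed, a vertical arrow of $\mathbb{J}^\dpf$ carries liftings against the vertical arrows of $\mathbb{J}$ (the objects of $\mathcal{J}_1$) subject to the horizontal condition \eqref{eq:horizontal} (which is exactly what $\mathcal{J}_1^\pitchfork$ already encodes) together with the vertical condition, while the squares of $\mathbb{J}^\dpf$ are exactly the morphisms of $\mathcal{J}_1^\pitchfork$. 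Since the \eqref{special_algebra}-satisfying $T_1$-algebras likewise form a full subcategory of $\Toalg$ and the morphisms on both sides coincide, the whole proposition reduces to an objectwise statement: writing $\beta=(\beta_0,1)$ for the $T_1$-algebra corresponding to $\phi$, so that $\phi_j(\sigma)=\beta_0\circ\theta_j(\sigma)$ by Remark \ref{rem:beta}, I must show that \eqref{special_algebra} holds if and only if $\phi$ is vertically compatible.

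The heart of the argument is to test the equation \eqref{special_algebra} — the equality of $\beta\circ\gamma_f$ and $\beta\circ T_1\beta\circ\lambda_f$ as maps out of $T_2f$ — against the universal property of $T_2f$ from Theorem \ref{univ_prop_T}. Since $\kappa_{f,f}$ is a bijection, these two maps are equal if and only if they have the same underlying morphism and induce the same $1$-step lifting operation against $\mathcal{J}_2$, i.e. agree after precomposition with every $\theta_{(i,j)}(\sigma)$. For the underlying morphisms I would combine $\gamma_f\circ(\eta_2)_f=(\eta_1)_f$, the identity $\lambda_f\circ(\eta_2)_f=(\eta_1)_{T_1f}\circ(\eta_1)_f$, naturality of $\eta_1$ (applied to $\beta\colon T_1f\to f$), and the algebra law $\beta\circ(\eta_1)_f=\id$, to conclude that both underlying morphisms equal $\id_f$. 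For the liftings, the defining relation \eqref{def_gamma} gives $(\beta\circ\gamma_f)_0\circ\theta_{(i,j)}(\sigma)=\beta_0\circ\theta_{j\cdot i}(\sigma)=\phi_{j\circ i}(\sigma)$, the direct lift against the composite.

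For the other side, the relation \eqref{def_lambda} rewrites $(\lambda_f)_0\circ\theta_{(i,j)}(\sigma)$ as the iterated lift $\theta_j(\theta_i(\sigma_0,\sigma_1\circ Uj),\sigma_1)$; applying the functoriality relation \eqref{morph_prop2} for $T_1\beta$ then pushes $(T_1\beta)_0$ through the outer $\theta_j$, turning the inner lift into $\beta_0\circ\theta_i(\sigma_0,\sigma_1\circ Uj)=\phi_i(\sigma_0,\sigma_1\circ Uj)$ via Remark \ref{rem:beta}, and a final application of $\beta_0\circ\theta_j(-)=\phi_j(-)$ yields $(\beta\circ T_1\beta\circ\lambda_f)_0\circ\theta_{(i,j)}(\sigma)=\phi_j(\phi_i(\sigma_0,\sigma_1\circ Uj),\sigma_1)$. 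Equating the direct and iterated lifts for all $((i,j),\sigma)$ is exactly the vertical compatibility condition \eqref{eq:vertical}, so by the bijectivity of $\kappa_{f,f}$ we obtain \eqref{special_algebra} $\iff$ vertical compatibility, which finishes the identification of objects and hence of the restricted isomorphism.

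The main obstacle will be the bookkeeping in this iterated-lift computation: carefully distinguishing which $\theta$ belongs to which pointed endofunctor (the operation of $(\eta_1)_f\colon f\to T_1f$ versus that of $(\eta_1)_{T_1f}\colon T_1f\to T_1T_1f$), and applying \eqref{morph_prop2} to the correct outer lift so that $(T_1\beta)_0$ converts the inner $\theta_i(\cdots)$ into $\phi_i(\cdots)$. Once both induced $1$-step operations are shown to be the direct and iterated lifts respectively, the equivalence with \eqref{eq:vertical} is immediate from the definitions.
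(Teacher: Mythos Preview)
Your proposal is correct and follows essentially the same route as the paper. The only cosmetic difference is that the paper packages the reduction by observing that both composites $\beta\circ\gamma_f$ and $\beta\circ T_1\beta\circ\lambda_f$ are $T_2$-algebra structures on $f$ and then invokes the isomorphism $\Ttwalg\cong\mathcal{J}_2^\pitchfork$, whereas you invoke the bijection $\kappa_{f,f}$ from Theorem~\ref{univ_prop_T} directly and check the underlying morphism and the liftings separately; the subsequent computations of the two induced $\mathcal{J}_2$-lifting operations are identical.
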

\begin{proof}
By Proposition \ref{lifting_alg} we  have an isomorphism $\mathcal{J}_1^\pitchfork \cong \Toalg$ over $\mathcal{C}^{\2b}$. But $(\mathbb{J}^\dpf)_1$ is the full subcategory of $\mathcal{J}_1^\pitchfork$ of lifting structures that additionally satisfy the vertical condition. Hence it suffices to show that $(f,\phi) \in \mathcal{J}_1^\pitchfork$ satisfies the vertical condition if and only if the corresponding $T_1$-algebra $\beta \colon T_1 f \to f$, as described in Remark \ref{rem:beta}, satisfies \eqref{special_algebra}.

Now observe that the two paths $T_2f \rightrightarrows f$ of \eqref{special_algebra} precompose with the unit $(\eta_2)_f$ to give the identity of $f$.  As such, they are $T_2$-algebra structures.  Therefore, these two paths will be equal just when the associated $\mathcal J_2$-lifting structures coincide.

To calculate the associated $\mathcal J_2$-lifting structures consider a vertically composable pair $(i,j)$ and morphism $\sigma: U(j \cdot i) \to f$.  The first lifting structure has component

\begin{align*}
        \beta_0 \circ (\gamma_f)_0 \circ \theta_{(i,j)}(\sigma) {=} \beta_0 \circ \theta_{j \cdot i}(\sigma) = \phi_{j \cdot i}(\sigma), 
    \end{align*}
    where the first equality holds by definition of $\gamma$ as in \eqref{def_gamma} and the second  by definition of $\beta$ as in Remark \ref{rem:beta}.  In other words, it uses the $\mathcal J_1$-lifting structure on $f$ to lift against the composite.
    
The second lifting structure has component
    \begin{align*}
        \beta_0 \circ (T_1 \beta)_0 \circ (\lambda_f)_0 \circ \theta_{(i,j)}(\sigma) & =  & \textrm{(by Equation }\eqref{def_lambda}) &\\
        \beta_0 \circ (T_1 \beta)_0 \circ \theta_{j}(\theta_{i}(\sigma_0, \sigma_1 \circ Uj), \sigma_1)&=  & \textrm{(by Equation }\eqref{morph_prop2}) &\\
        \beta_0 \circ \theta_{j}(\beta_0 \circ \theta_{i}(\sigma_0, \sigma_1 \circ Uj), \sigma_1) &= & \textrm{(by definition of } \beta \textrm{ as in Remark }  \ref{rem:beta}) &\\
        \beta_0 \circ \theta_{j}(\phi_i(\sigma_0, \sigma_1 \circ Uj), \sigma_1)  &= & \textrm{(by definition of } \beta \textrm{ as in Remark }  \ref{rem:beta}) &\\
       \phi_j(\phi_i(\sigma_0, \sigma_1 \circ Uj), \sigma_1)
    \end{align*}
which is to say, it first lifts against $i$ and then against $j$.  Therefore, to say that the two $\mathcal J_2$-lifting structures coincide is precisely to say that the vertical compatibility condition holds, as required.

\end{proof}
\noindent Let us call a $T_1$-algebra special if diagram \eqref{special_algebra} commutes, and denote the full subcategory of special $T_1$-algebras by $\Toalg^s$. Proposition \ref{lifting_alg_double} shows that we have an isomorphism $(\mathbb{J}^\dpf)_1 \cong \Toalg^s$ over $\mathcal{C}^{\2b}$.  Hence to show that the forgetful functor $(\mathbb{J}^\dpf)_1 \to \mathcal{C}^{\2b}$ has a left adjoint is equally to show that the forgetful functor $\Toalg^s \to \mathcal{C}^{\2b}$ has a left adjoint, i.e. that the free special $T_1$-algebra exists. 

\subsection{Existence of the free special algebra}\label{sect:special}


In what follows, we work in the more general setting of a pair of pointed endofunctors $(T_1,\eta_1)$ and $(T_2,\eta_2)$ on a cocomplete category $\mathcal{C}$ together with natural transformations $\gamma : T_2 \Rightarrow T_1$ and $\lambda : T_2 \Rightarrow T_1T_1$. We define $\Toalg^s$ as the full subcategory of `special' $T_1$-algebras $\beta : T_1X \to X$, i.e. those for which the following diagram commutes
   \begin{equation} \label{special_algebra_gen}
    \xymatrix{
    T_2 X \ar[d]_{\lambda_X} \ar[rr]^{\gamma_X} & & T_1X \ar[d]^{\beta} \\
    T_1T_1X \ar[r]_{T_1\beta} & T_1X \ar[r]_{\beta} & X. 
    }
    \end{equation}
Our aim to show that the free special $T_1$-algebra exists. The proof is an adaption of the proof we gave in Section \ref{section:free_alg} for the existence of free algebras on pointed endofunctors. 

Firstly, we define the category of special algebraic chains. 

\begin{defi} \label{special_alg_chain}
    A \textit{special} \emph{algebraic chain} $(X,x)$ is an algebraic chain (for $T_1$) with the additional condition that for all $n$ the diagram 
        \begin{equation} \label{diag:sp_alg_ch}
        \xymatrix{
        T_2 X_n \ar[d]_{\lambda_{X_n}} \ar[r]^{\gamma_{X_n}} & T_1X_n \ar[r]^{x_n} & X_{n+1} \ar[d]^{j_{n+1}^{n+2}} \\
        T_1T_1X_n \ar[r]_{T_1x_n} & T_1X_{n+1} \ar[r]_{x_{n+1}} & X_{n+2} 
        }
        \end{equation}
    commutes. This is a full subcategory of $\Toalg_\infty$ which we denote by $\Tsalg$. 
\end{defi}

Note that the constant functor $\Delta$ restricts to $\Delta: \Toalg^s \to \Tsalg$. Indeed, given a special algebra $(X,\beta)$, diagram \eqref{diag:sp_alg_ch} for the constant algebraic chain $\Delta(X,\beta)$ simply collapses to diagram $\eqref{special_algebra_gen}$ for every $n$. Thus, we have a diagram 
\begin{equation*}
\xymatrix{
\Toalg^s \ar[dr]_{U} \ar[r]^{\Delta} & \Tsalg \ar[d]^{V} \\
& \mathcal C}
\end{equation*}
and we can again break down the problem of constructing the free special algebra into two parts: (1) constructing the free special algebraic chain $X^{\star}$ on  $X$ and (2) establishing when it admits a reflection along $\Delta$.  

Again, the interesting part is to draw out the formula for the free special algebraic chain.  Recall from the argument above Proposition \ref{prop:free} that a chain together with maps $x_n \colon T_1 X_n \to X_{n+1}$ satisfying the unit equation $x_{n} \circ \eta_{X_{n}} = j^{n+1}_n$ is an algebraic chain just when the two diagrams on the line below

\begin{equation*}
\xymatrix{
T_1 X_{n} \ar@<.6ex>[rr]^{T_1 x_{n} \circ T_1\eta_{X_{n}}}\ar@<-.6ex>[rr]_{T _1 x_{n} \circ \eta_{T_1 X_{n}}} && T_1 X_{n+1} \ar[r]^{x_{n+1}} & X_{n+2}
}
\hspace{1cm}
\xymatrix{
\colim_{n<m}T_1X_{n} \ar@<0.5ex>[rr]^(.6){\langle T_1{j_n^m} \rangle} \ar@<-0.5ex>[rr]_(.6){\langle \eta_{X_{m}} \circ j_{n+1}^{m} \circ x_{n} \rangle} && T_1X_{m} \ar[r]^{x_{m}} & X_{m+1}
}
\end{equation*}
are forks, where in the second case $m$ is a limit ordinal.   Assuming these equations hold, the equation \eqref{diag:sp_alg_ch} for a special algebraic chain becomes the assertion that the following diagram
\begin{equation*}
\xymatrix{
T_2 X_{n} \ar@<.6ex>[rr]^{T_1 x_{n} \circ \lambda_{X_{n}}}\ar@<-.6ex>[rr]_{T_1 (j_n^{n+1}) \circ \gamma_{X_{n}}} && T_1 X_{n+1} \ar[r]^{x_{n+1}} & X_{n+2}
}
\end{equation*}
is a fork, using that $x_{n+1} \circ T_1 j_n^{n+1} = j_{n+1}^{n+2} \circ x_n$.  Therefore, the only change overall is that $X_{n+2}$ must coequalise two forks, which leads to the formula in the following proposition, describing the value of the free special algebraic chain $X^\star$ at $n+2$ as a joint coequaliser.

\begin{prop} \label{prop:free2} 
    If $\mathcal{C}$ is cocomplete, then the forgetful functor $V : \Tsalg \to \mathcal{C}$ has a left adjoint which sends an object $X \in \mathcal{C}$ to the algebraic chain $X^\star$ defined as follows: 
    \begin{itemize}
        \item $X^\star_0 = X, X^\star_1 = T_1X, j_0^1 = \eta_X : X \to T_1X$ and $x_0 = 1 : T_1X \to T_1X$. 
        \item For all $n$, $X^\star_{n+2}$ and $x_{n+1}$ are defined as the joint coequaliser: 
        \[\xymatrix{
 T_1X^\star_n   \ar@<1ex>[drr]^{T_1x_n \circ T_1\eta_{X^\star_n}}   \ar@<-1ex>[drr]_{T_1x_n \circ \eta_{T_1X^\star_n}} \\
 & &  T_1X^\star_{n+1} \ar[r]^{x_{n+1}} & X^\star_{n+2} \\
  T_2X^\star_n \ar@<1ex>[urr]^{T_1x_n \circ \lambda_{X^\star_n}} \ar@<-1ex>[urr]_{T_1(j_n^{n+1}) \circ \gamma_{X^\star_n}}
  }
  \]        and $j_{n+1}^{n+2} = x_{n+1} \circ \eta_{X^\star_{n+1}}$. 
        \item At a limit ordinal $\alpha$,
\begin{itemize} 
\item $X^{\star}_{\alpha} = \colim_{n < \alpha} X^{\star}_{n}$ with the connecting maps $j_n^\alpha$ the colimit inclusions.  
\item $X^{\star}_{\alpha + 1}$ is the coequaliser
\begin{equation*}
\xymatrix{
\colim_{n<m}TX^{\star}_{n} \ar@<0.5ex>[rr]^(.6){\langle T{j_n^\alpha} \rangle} \ar@<-0.5ex>[rr]_(.6){\langle \eta_{X^{\star}_{\alpha}} \circ j_{\alpha +1}^{\alpha} \circ x_{n} \rangle} && TX^{\star}_{\alpha} \ar[r]^{x_{\alpha}} & X^{\star}_{\alpha+1}
}
\end{equation*}
with $j_{\alpha}^{\alpha+1}=x_{\alpha} \circ \eta_{X^{\star}_{\alpha}}$.
\end{itemize}
    \end{itemize}
\end{prop}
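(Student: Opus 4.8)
The plan is to adapt the proof of Proposition~\ref{prop:free} essentially verbatim, the only genuinely new feature being the joint coequaliser appearing at successor-of-successor stages, which exists since $\mathcal C$ is cocomplete. As there, the argument splits into two parts: first verifying that $X^\star$ is a special algebraic chain, and then establishing that it is the value of the left adjoint at $X$, with identity unit.

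For the first part, the unit compatibility \eqref{eq:unit} holds by the definitions of the connecting maps $j_{n+1}^{n+2} = x_{n+1}\circ\eta_{X^\star_{n+1}}$ and $j_\alpha^{\alpha+1}=x_\alpha\circ\eta_{X^\star_\alpha}$, exactly as before. At a stage of the form $n+2$, the defining joint coequaliser forces $x_{n+1}$ to coequalise \emph{both} parallel pairs. Coequalising the upper (i.e.\ $T_1$) pair is precisely the fork \eqref{eq:alg1}, giving the successor case of the algebraic chain condition; coequalising the lower (i.e.\ $T_2$) pair is precisely the fork reformulation of \eqref{diag:sp_alg_ch}, which, using the identity $x_{n+1}\circ T_1 j_n^{n+1} = j_{n+1}^{n+2}\circ x_n$ (itself a consequence of coequalising the $T_1$-pair), yields the special condition at $n$. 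The limit and limit-successor stages are treated word for word as in Proposition~\ref{prop:free}; in particular the special condition \eqref{diag:sp_alg_ch} at a \emph{limit} index $\alpha$ is not imposed when building $X^\star_{\alpha+1}$ (an ordinary coequaliser) but rather at the later stage $X^\star_{\alpha+2}$, which is itself a joint coequaliser with $n=\alpha$.

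For the second part, given $f\colon X\to Y_0=V(Y,y)$ with $(Y,y)\in\Tsalg$, I would construct the unique morphism $\bar f\colon X^\star\to (Y,y)$ of special algebraic chains with $\bar f_0=f$ by transfinite recursion, following Proposition~\ref{prop:free}: one is forced to set $\bar f_1=y_0\circ T_1 f$, at limit ordinals $\bar f_\alpha$ is the induced map out of the colimit, and at limit-successors $\bar f_{\alpha+1}$ is induced from the ordinary coequaliser. The only new step is at a stage $n+2$, where $\bar f_{n+2}$ must be induced from the \emph{joint} coequaliser. By its universal property, a map $\bar f_{n+2}$ with $\bar f_{n+2}\circ x_{n+1}=y_{n+1}\circ T_1\bar f_{n+1}$ exists and is unique precisely when $y_{n+1}\circ T_1\bar f_{n+1}$ coequalises both parallel pairs. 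That it coequalises the $T_1$-pair is shown exactly as in Proposition~\ref{prop:free}, using that $(Y,y)$ is an algebraic chain.

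The main obstacle, and the one new computation, is showing that $y_{n+1}\circ T_1\bar f_{n+1}$ also coequalises the $T_2$-pair. Here I would transport both legs into $(Y,y)$: using the chain-map identities $\bar f_{n+1}\circ x_n=y_n\circ T_1\bar f_n$ and $\bar f_{n+1}\circ j_n^{n+1}=j_n^{n+1}\circ\bar f_n$ (valid by the inductive construction) together with the naturality of $\lambda$ and $\gamma$, one rewrites
\[
y_{n+1}\circ T_1\bar f_{n+1}\circ T_1 x_n\circ\lambda_{X^\star_n}=y_{n+1}\circ T_1 y_n\circ\lambda_{Y_n}\circ T_2\bar f_n
\]
and
\[
y_{n+1}\circ T_1\bar f_{n+1}\circ T_1(j_n^{n+1})\circ\gamma_{X^\star_n}=y_{n+1}\circ T_1(j_n^{n+1})\circ\gamma_{Y_n}\circ T_2\bar f_n .
\]
The two right-hand sides agree after precomposition with $T_2\bar f_n$ because $y_{n+1}\circ T_1 y_n\circ\lambda_{Y_n}=y_{n+1}\circ T_1(j_n^{n+1})\circ\gamma_{Y_n}$ is exactly the fork form of the special condition \eqref{diag:sp_alg_ch} for $(Y,y)$, which holds since $(Y,y)$ is special. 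This produces the desired $\bar f_{n+2}$; the same bookkeeping shows that $\bar f$ is a chain map commuting with the $x_n$, hence a morphism of special algebraic chains, and uniqueness is immediate from the universal properties invoked at each stage.
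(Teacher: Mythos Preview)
Your proposal is correct and follows exactly the approach the paper takes: the paper's proof consists of a single sentence stating that it is ``identical in form to that of Proposition~\ref{prop:free} with the exception that in extending from $n+1$ to $n+2$ we use the universal property of $X^\star_{n+2}$ as a joint coequaliser rather than a coequaliser.'' You have simply spelled out in detail the one new verification this entails---namely that $y_{n+1}\circ T_1\bar f_{n+1}$ coequalises the $T_2$-pair, via naturality of $\lambda$ and $\gamma$ and the special condition on $(Y,y)$---which the paper leaves implicit.
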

\begin{proof}
The proof is identical in form to that of Proposition \ref{prop:free} with the exception that in extending from $n+1$ to $n+2$ we use the universal property of $X^\star_{n+2}$ as a joint coequaliser rather than a coequaliser.
 \end{proof}

Turning to part (2), we have seen in section \ref{section:free_alg} that if an algebraic chain $(X,x)$ stabilises at $n$ then $X_{n}$ equipped with the $T_1$-algebra structure
\begin{equation}\label{eq:structure2}
(j_n^{n+1})^{-1} \circ x_n:T_1X_n \to X_{n+1} \cong X_n
\end{equation} is a reflection of $(X,x)$ along $\Delta$. In particular, in this case $X_{n}$, with structure map as in \eqref{eq:structure2}, is the free $T_1$-algebra on $X$. Moreover, if $m$ is a limit ordinal and $T_1$ preserves the colimit $X^{\star}_m = \colim_{n < m} X^{\star}_n$, then $X^{\star}$ stabilises. Thus, it suffices to show that if $(X,x)$ is a special algebraic chain, then $X_n$ equipped with the structure map \eqref{eq:structure2} is in fact a special algebra.

\begin{prop}\label{prop:stablefree2}
If the special algebraic chain $(X,x)$ stabilises at an ordinal $n$, then the $T_1$-algebra 
\begin{equation}\label{eq:structure3}
\beta = (j_n^{n+1})^{-1} \circ x_n:TX_n \to X_{n+1} \cong X_n
\end{equation}
of Proposition \ref{prop:stablefree} is special, and is a reflection along $\Delta$. 
\end{prop}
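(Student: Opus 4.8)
The plan is to prove two things. First, that the $T_1$-algebra $\beta = (j_n^{n+1})^{-1} \circ x_n$ furnished by Proposition \ref{prop:stablefree} satisfies the special condition \eqref{special_algebra_gen} at $X_n$; this is the real content, and reduces to a direct computation. Second, that once $(X_n,\beta)$ is known to be special, it is automatically a reflection of $(X,x)$ along the restricted functor $\Delta \colon \Toalg^s \to \Tsalg$; this is a formal consequence of Proposition \ref{prop:stablefree} together with the fullness of the inclusions $\Toalg^s \hookrightarrow \Toalg$ and $\Tsalg \hookrightarrow \Toalg_\infty$.

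For the first part I would unwind the special condition at $X_n$, namely $\beta \circ \gamma_{X_n} = \beta \circ T_1\beta \circ \lambda_{X_n}$, using three ingredients: (i) the algebraic chain identity \eqref{eq:alg} at $m = n+1$, i.e. $j_{n+1}^{n+2} \circ x_n = x_{n+1} \circ T_1 j_n^{n+1}$; (ii) the invertibility of $j_n^{n+1}$ and $j_{n+1}^{n+2}$ coming from stabilisation (note $j_{n+1}^{n+2} = j_n^{n+2} \circ (j_n^{n+1})^{-1}$), together with invertibility of $T_1 j_n^{n+1}$ since $T_1$ preserves isomorphisms; and (iii) the special algebraic chain equation \eqref{diag:sp_alg_ch} at stage $n$, namely $x_{n+1} \circ T_1 x_n \circ \lambda_{X_n} = j_{n+1}^{n+2} \circ x_n \circ \gamma_{X_n}$. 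Substituting $\beta = (j_n^{n+1})^{-1}\circ x_n$ and $T_1\beta = (T_1 j_n^{n+1})^{-1} \circ T_1 x_n$, rewriting $x_n \circ (T_1 j_n^{n+1})^{-1} = (j_{n+1}^{n+2})^{-1}\circ x_{n+1}$ by (i), and then applying (iii), the crux is the chain
\begin{align*}
\beta \circ T_1\beta \circ \lambda_{X_n}
&= (j_n^{n+1})^{-1} \circ x_n \circ (T_1 j_n^{n+1})^{-1} \circ T_1 x_n \circ \lambda_{X_n} \\
&= (j_n^{n+1})^{-1} \circ (j_{n+1}^{n+2})^{-1} \circ x_{n+1} \circ T_1 x_n \circ \lambda_{X_n} \\
&= (j_n^{n+1})^{-1} \circ (j_{n+1}^{n+2})^{-1} \circ j_{n+1}^{n+2} \circ x_n \circ \gamma_{X_n} \\
&= (j_n^{n+1})^{-1} \circ x_n \circ \gamma_{X_n} = \beta \circ \gamma_{X_n},
\end{align*}
so that $\beta$ is special.

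For the second part I would invoke Proposition \ref{prop:stablefree}: it gives, for every $T_1$-algebra $(A,a)$, a bijection $\Toalg((X_n,\beta),(A,a)) \cong \Toalg_\infty((X,x),\Delta(A,a))$ induced by composition with the reflection unit $\eta \colon (X,x) \to \Delta(X_n,\beta)$. Since $(X_n,\beta)$ is now known to be special, $\Delta(X_n,\beta)$ is a special algebraic chain, so $\eta$ is in fact a morphism of $\Tsalg$. Restricting the bijection to special $(A,a)$ and using fullness of both subcategory inclusions — which leaves every relevant hom-set unchanged — yields $\Toalg^s((X_n,\beta),(A,a)) \cong \Tsalg((X,x),\Delta(A,a))$, exhibiting $(X_n,\beta)$ as a reflection of $(X,x)$ along $\Delta \colon \Toalg^s \to \Tsalg$.

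The main obstacle is the bookkeeping in the first part: one must track precisely which maps are being inverted and confirm each is an isomorphism — $T_1 j_n^{n+1}$ by functoriality and $j_n^{n+1}, j_{n+1}^{n+2}$ by stabilisation — and apply the algebraic and special-chain identities at the correct places. Once the crux identity is secured, the remainder, including the reflection step, is entirely formal.
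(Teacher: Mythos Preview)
Your proof is correct and follows essentially the same route as the paper. The paper's computation of the special condition is line-for-line the same as yours (they compose $(j_n^{n+1})^{-1}\circ(j_{n+1}^{n+2})^{-1}$ into $(j_n^{n+2})^{-1}$ as an extra step, but this is cosmetic), and for the reflection they simply invoke Proposition~\ref{prop:stablefree} without spelling out, as you do, the fullness argument that transfers the reflection to the special subcategories.
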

\begin{proof}
By Proposition \ref{prop:stablefree}, it suffices to prove that the $T_1$-algebra structure is special.  For arbitrary $n$, we compute: 
    \begin{align*}
        \beta \circ T_1\beta \circ \lambda_{X_{n}}  & = & \textrm{(by definition of } \beta) & \\
      (j_{n}^{n+1})^{-1} \circ x_{n} \circ T_1(j_{n}^{n+1})^{-1} \circ T_1x_{n} \circ \lambda_{X_{n}}  & = & \textrm{(by definition of an algebraic chain})& \\
        (j_{n}^{n+1})^{-1} \circ (j_{n+1}^{n+2})^{-1} \circ x_{n+1} \circ T_1x_{n} \circ \lambda_{X_{n}} & = & \textrm{(on composing inverses})& \\
        (j_{n}^{n+2})^{-1} \circ x_{n+1} \circ T_1x_{n} \circ \lambda_{X_{n}} & = & \textrm{(by Equation }\eqref{diag:sp_alg_ch} \textrm{ for a special algebraic chain})& \\
         (j_{n}^{n+2})^{-1} \circ j_{n+1}^{n+2} \circ x_n \circ \gamma_{X_n}  & = & \textrm{(by functoriality of chains})& \\
         (j_{n}^{n+1})^{-1} \circ x_n \circ \gamma_{X_n} & =  & \textrm{(by definition of } \beta)& \\
         \beta \circ \gamma_{X_{n}} 
    \end{align*}
\end{proof}

\begin{thm}\label{thm:chains2}
Let $(T_1,\eta_1)$ and $(T_2,\eta_2)$ be pointed endofunctors on a cocomplete category $\mathcal{C}$ together with natural transformations $\gamma : T_2 \Rightarrow T_1$ and $\lambda : T_2 \Rightarrow T_1T_1$.  If either
\begin{enumerate}
\item[(a)] $T_1$ and $T_2$ preserve colimits of $\alpha$-chains for some limit ordinal $\alpha$, or
\item[(b)] $\mathcal C$ is equipped with a co-well-powered proper factorisation system $(E,M)$ such that $T_1$ and $T_2$ preserves colimits of $(\alpha,M)$-chains for some limit ordinal $\alpha$.
\end{enumerate}
Then (1) each special algebraic chain $X^{\star}$ stabilises and (2) its point of stabilisation, with algebra structure as in \eqref{eq:structure3}, is the free special $T$-algebra on $X$.
\end{thm}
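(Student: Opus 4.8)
The plan is to mirror the proof of Theorem \ref{thm:chains} almost verbatim, splitting the argument into the two advertised parts and substituting the ``special'' versions of the auxiliary results. The conceptual skeleton is the commuting triangle of forgetful functors $\Toalg^s \xrightarrow{\Delta} \Tsalg \xrightarrow{V} \mathcal{C}$ with $U = V \circ \Delta$, which lets me build the free special $T_1$-algebra as a composite of two reflections, exactly as the free $T_1$-algebra was built in the pointed-endofunctor setting.

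For part (2) --- identifying the point of stabilisation as the free special algebra --- I would argue exactly as in Theorem \ref{thm:chains}. Assuming $X^{\star}$ stabilises at some ordinal $n$, Proposition \ref{prop:free2} exhibits $X^{\star}$ as the value at $X$ of the left adjoint to $V \colon \Tsalg \to \mathcal{C}$, that is, as the free special algebraic chain on $X$. Proposition \ref{prop:stablefree2} then shows that $X^{\star}_n$, equipped with the structure map \eqref{eq:structure3}, is a special $T_1$-algebra and is the reflection of $X^{\star}$ along $\Delta$. Composing the two reflections along the commuting triangle yields that $X^{\star}_n$ is the free special $T_1$-algebra on $X$, as required; this part is purely formal and introduces no new difficulties beyond those already dispatched in Propositions \ref{prop:free2} and \ref{prop:stablefree2}.

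For part (1) --- stabilisation of $X^{\star}$ --- I would defer the detailed transfinite argument to the appendix, adapting the stabilisation argument used for the ordinary free algebraic chain $X^{\bullet}$ in Theorem \ref{thm:chains}. The key point is that at a limit ordinal $\alpha$ one has $X^{\star}_{\alpha} = \colim_{n<\alpha} X^{\star}_n$, and one must show that the successor stages yield invertible connecting maps: $X^{\star}_{\alpha+1}$, a single coequaliser, gives $j_{\alpha}^{\alpha+1}$ invertible using that $T_1$ preserves the colimit at $\alpha$, exactly as in the ordinary case; while $X^{\star}_{\alpha+2}$, a joint coequaliser, is where the hypothesis on $T_2$ enters. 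Indeed, the joint coequaliser defining $X^{\star}_{\alpha+2}$ carries an extra leg out of $T_2 X^{\star}_{\alpha}$, and to see that this leg already collapses one must recognise $T_2 X^{\star}_{\alpha}$ as $\colim_{n<\alpha} T_2 X^{\star}_n$, which is precisely the assumption that $T_2$ preserves colimits of $\alpha$-chains (resp. $(\alpha,M)$-chains). Once both $j_{\alpha}^{\alpha+1}$ and $j_{\alpha+1}^{\alpha+2}$ are invertible, the objects have stabilised and all later connecting maps, together with those at higher limit ordinals, follow by the same propagation as in the ordinary case.

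The main obstacle I anticipate is making the stabilisation argument for the joint-coequaliser chain fully rigorous, and in particular the case (b) analysis: there one must verify that all connecting maps $j_n^{n+1}$, now built from joint coequalisers of quotient maps rather than from single coequalisers, still lie in the right class $M$ of the proper, co-well-powered factorisation system, so that the $(\alpha,M)$-smallness of both $T_1$ and $T_2$ can be applied. This is the step that genuinely goes beyond the pointed-endofunctor setting of Theorem \ref{thm:chains} and requires the most care; everything else is a direct transcription of the earlier argument with $T_1$, $T_2$, $\gamma$ and $\lambda$ inserted.
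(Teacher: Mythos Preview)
Your overall plan for part~(2) and the skeleton for part~(1) match the paper exactly: the commuting triangle $\Toalg^s \xrightarrow{\Delta} \Tsalg \xrightarrow{V} \mathcal C$, Propositions~\ref{prop:free2} and~\ref{prop:stablefree2}, and the two-step stabilisation at $\alpha$ and $\alpha+1$ (the latter via the joint coequaliser, using that $T_2$ preserves the colimit) are precisely what the paper does.

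Where you go astray is in your anticipated obstacle for case~(b). You propose to verify that the connecting maps $j_n^{n+1}$ of $X^\star$ lie in $M$, so that the $(\alpha,M)$-chain hypothesis can be applied directly. This is not how the paper proceeds, and indeed there is no reason to expect the joint-coequaliser maps to land in $M$. Instead, the paper uses the Koubek--Reiterman lemma (as in Step~1 of the proof of Theorem~\ref{thm:chains}), which says that under hypothesis~(b) an endofunctor preserving colimits of $(\alpha,M)$-chains will preserve the colimit of \emph{any} chain at some sufficiently large limit ordinal. The only new wrinkle in the special setting is that one needs a \emph{single} ordinal working simultaneously for $T_1$ and $T_2$; the paper handles this by applying Koubek--Reiterman to the product endofunctor $T_1\times T_2$ on $\mathcal C^2$ with the pointwise $(E,M)$-factorisation system, which is still proper and co-well-powered. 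This reduces~(b) to~(a) for the particular chain $X^\star$ without ever examining whether its connecting maps lie in $M$.

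One smaller point: the propagation step is not quite ``the same as in the ordinary case''. In the ordinary chain a single invertible $j_n^{n+1}$ forces $j_{n+1}^{n+2}$ to be invertible, but in the special chain the joint coequaliser defining $X^\star_{n+2}$ depends on both $X^\star_n$ and $X^\star_{n+1}$, so one needs \emph{two} consecutive invertible connecting maps to propagate. You do start with both $j_\alpha^{\alpha+1}$ and $j_{\alpha+1}^{\alpha+2}$, so the induction goes through, but the inductive step itself is $j_n^{n+1},\,j_{n+1}^{n+2}\text{ invertible} \Rightarrow j_{n+2}^{n+3}\text{ invertible}$.
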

\begin{proof}
We defer the proof that $X^{\star}$ stabilises to the appendix.  Since the triangle
\begin{equation*}
\xymatrix{
\Toalg^s \ar[dr]_{U} \ar[r]^{\Delta} & \Tsalg \ar[d]^{V} \\
& \mathcal C}
\end{equation*}
commutes, the free special $T$-algebra is given by the reflection of the free algebraic chain $X^{\star}$ of Proposition \ref{prop:free2} along $\Delta$.  By Proposition \ref{prop:stablefree2}, this reflection is the point of stabilisation of $X^{\star}$. \end{proof}

\subsection{The small object argument for double-categorical cofibrant generation}

We are now ready to give the main result on cofibrant generation by double categories of morphisms.  This improves Bourke and Garner's result, Proposition 23 of \cite{Bourke2016Accessible}, by removing the local presentability assumption and by giving an explicit description of the construction.

\begin{thm}\label{thm:main}
Let $\mathcal{C}$ be a cocomplete locally small category, $\mathbb J$ a small double category and consider $U : \mathbb J \to \Sq(\mathcal{C})$.  If there exists a limit ordinal $\alpha$ such that either
\begin{enumerate}
\item  for each object $A$ of $\mathbb J$, $UA$ is $\alpha$-small in $\mathcal C$; or 
\item   for each object $A$ of $\mathbb J$, $UA$ is
 $(\alpha,M)$-small with respect to some proper co-well-powered factorisation system $(E,M)$ on $\mathcal C$,
\end{enumerate}
then the awfs cofibrantly generated by $\mathbb J$ exists.
\end{thm}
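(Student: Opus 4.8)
The plan is to reduce the statement, via the machinery already assembled, to the transfinite construction of free special algebras in Theorem \ref{thm:chains2}, paralleling the proof of Theorem \ref{thm:categories}. First I would invoke Theorem \ref{thm:doublecof}: the awfs cofibrantly generated by $\mathbb J$ exists if and only if the forgetful functor $V_1 \colon (\mathbb J^\dpf)_1 \to \mathcal C^{\2b}$ admits a left adjoint. By the isomorphism $(\mathbb J^\dpf)_1 \cong \Toalg^s$ over $\mathcal C^{\2b}$ established in Proposition \ref{lifting_alg_double}, this is equivalent to showing that the forgetful functor $\Toalg^s \to \mathcal C^{\2b}$ has a left adjoint, i.e. that the free special $T_1$-algebra exists.

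Next I would verify that the hypotheses of Theorem \ref{thm:chains2} are met, namely that the pointed endofunctors $T_1$ and $T_2$ (arising from $U_1 \colon \mathcal J_1 \to \mathcal C^{\2b}$ and $U_2 = U_1 m \colon \mathcal J_2 \to \mathcal C^{\2b}$ respectively) preserve colimits of $\alpha$-chains in case (1), or of $(\alpha,M)$-chains in case (2). This is where I would apply Proposition \ref{prop:T_finitary} twice. The essential observation is one of bookkeeping: the domain and codomain of $U_1 j$ for any vertical arrow $j$ of $\mathbb J$, and likewise the domain and codomain of $U_2(i,j)$ for any composable pair, are all objects of the form $UA$ with $A$ an object of $\mathbb J$ — a vertical arrow runs between two such objects, and the boundary of a vertical composite $j \cdot i$ consists only of the outer domain and codomain, each again of the form $UA$. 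Hence the single smallness hypothesis imposed on every object $UA$ governs the colimit-preservation of both $T_1$ and $T_2$ simultaneously.

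Concretely, in case (1) each $UA$ is $\alpha$-small, so each hom-functor $\mathcal C(UA,-)$ preserves colimits of $\alpha$-chains (which are filtered), and Proposition \ref{prop:T_finitary} then gives that both $T_1$ and $T_2$ preserve colimits of $\alpha$-chains; in case (2) the identical argument with $(\alpha,M)$-chains in place of $\alpha$-chains shows that both $T_1$ and $T_2$ preserve colimits of $(\alpha,M)$-chains. With these hypotheses in hand, Theorem \ref{thm:chains2} provides the free special $T_1$-algebra as the point of stabilisation of the free special algebraic chain $X^\star$, which is exactly the left adjoint we sought.

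The step I expect to require the most care is not conceptual but the confirmation that the smallness condition, stated only on the objects $UA$ of $\mathbb J$, genuinely suffices to feed Proposition \ref{prop:T_finitary} for \emph{both} endofunctors at once. In particular one must check that no domains or codomains outside the image of the objects of $\mathbb J$ enter the construction of $T_2$, since $\mathcal J_2$ is built from composable pairs and one might worry that intermediate objects appear; they do not, precisely because the density-comonad construction underlying $T_2$ sees only the boundary of each composite. Once this is verified, the proof is a direct assembly of the cited results.
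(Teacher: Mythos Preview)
Your proposal is correct and follows essentially the same approach as the paper's proof: reduce via Theorem \ref{thm:doublecof} and Proposition \ref{lifting_alg_double} to the existence of free special $T_1$-algebras, then feed Proposition \ref{prop:T_finitary} into Theorem \ref{thm:chains2}. Your added remark that the domains and codomains of $U_2(i,j) = U(j \cdot i)$ are again of the form $UA$ (so that the single smallness hypothesis covers both $T_1$ and $T_2$) is exactly the point the paper leaves implicit.
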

\begin{proof}
Combining Theorem~\ref{thm:doublecof} and Proposition \ref{lifting_alg_double}, this is the case if and only if the forgetful functor $U: \Toalg^s \to \mathcal{C}^{\2b}$ has a left adjoint.  Assuming (1), by Proposition ~\ref{prop:T_finitary}, $T_1$ and $T_2$ then preserve colimits of $\alpha$-chains; assuming (2), the same result ensures that both preserve colimits of $(\alpha,M)$-chains.  The claimed result now holds by the transfinite construction of free special algebras on pointed endofunctors described in Theorem~\ref{thm:chains2}.
\end{proof}

\begin{rem}\label{rem:explicit}
Let us now spell out the explicit small object argument of Theorem \ref{thm:main} and compare it with Garner's small object argument described in the introduction.  Our construction is an instance of the free special algebraic chain described in Proposition \ref{prop:free2} whilst Garner's construction is an instance of the free algebraic chain construction of Proposition \ref{prop:free}.  

Both begin in the same way.  Given a morphism $f \in \mathcal C$, we form $\eta_f \colon f \to T_1 f$ where $f^{\bullet}_1:= T_1 f$ is universally equipped (in the sense of Section \ref{sect:categorical}) with natural fillers for lifting problems against morphisms of $\mathcal J_1$ against $f$.  

In Garner's small object argument against $\mathcal J_1$, in the next step one forms the coequaliser
\[
        \xymatrix{
        Tf \ar@<1ex>[rr]^{T\eta_{f}} 
        \ar@<-1ex>[rr]_{\eta_{Tf}} & & T^{2}f \ar[r]^{} &  f^{\bullet}_2 \\
        }
\]
 which has the effect of adding fillers for lifting problems against $Tf$, and then quotienting out to identify those fillers for lifting problems whose solutions had already been added in the first stage.  

 In the double-categorical version, we do not just form a coequaliser at the second stage but a joint coequaliser as below.  
\[
\xymatrix{
 T_1f   \ar@<1ex>[drr]^{T\eta_{f}}   \ar@<-1ex>[drr]_{\eta_{Tf}} \\
 & &  (T_1)^2 f \ar[r]^{} & f^\star_{2} \\
  T_2f \ar@<1ex>[urr]^{\lambda_{f}} \ar@<-1ex>[urr]_{T_1\eta_f \circ \gamma_{f}}
  }
  \]

Here, the upper fork is the same as above whilst the lower fork of the joint coequaliser enforces the vertical compatibility condition for lifting problems against composable pairs of vertical morphisms (i.e. morphisms of $\mathcal J_2$) which are encoded by $T_2$.

At the later stages, we keep forming joint coequalisers, one fork of which quotients out redundant fillers  whilst the others enforces the vertical compatibility condition. (The horizontal compatibility condition is already encoded in the construction of the functor $T_1$.)
\end{rem}

\subsection{Constructive aspects}

From a constructive point of view most of our arguments up to this point are valid. There are two exceptions to this:
\begin{enumerate}
    \item[(1)] For certain ordinals it may be undecidable whether elements are zero, a successor or a limit. This means that case distinctions as in Propositions \ref{prop:free} and \ref{prop:free2} will not always be possible.
    \item[(2)] We do not see how to give a constructive proof of the clever lemma from Koubek and Reiterman \cite{Koubek1979Categorical} that we use in the appendix to reduce condition (b) to condition (a) in Theorems \ref{thm:chains} and \ref{thm:chains2}.
\end{enumerate}
We do not see how to overcome the second problem; however, the first problem does not occur for relatively small ordinals like $\omega$ and $\omega + \omega$. For that reason our methods still yield a constructive proof of the following result.

\begin{thm}\label{thm:constructivemain} {\rm (Constructive)}
Let $\mathcal{C}$ be a cocomplete locally small category, $\mathbb J$ a small double category and consider $U : \mathbb J \to \Sq(\mathcal{C})$.  
If for each object $A$ of $\mathbb J$, $UA$ is $\omega$-small in $\mathcal C$, then the awfs cofibrantly generated by $\mathbb J$ exists.
\end{thm}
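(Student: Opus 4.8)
The plan is to specialise the general machinery of Theorem~\ref{thm:main} to the case $\alpha = \omega$, and to verify that \emph{this particular instance} can be carried out without the two problematic ingredients flagged in items (1) and (2) above. By Theorem~\ref{thm:doublecof} together with Proposition~\ref{lifting_alg_double}, it suffices to construct a left adjoint to the forgetful functor $U \colon \Toalg^s \to \mathcal{C}^{\2b}$, i.e.\ to exhibit the free special $T_1$-algebra on each $f \in \mathcal{C}^{\2b}$. So the whole proof reduces to running the free-special-algebraic-chain construction of Proposition~\ref{prop:free2} up to $\omega$ and checking it stabilises.

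First I would record the preservation properties. Since each $UA$ is assumed $\omega$-small, Proposition~\ref{prop:T_finitary} shows that both $T_1$ and $T_2$ preserve colimits of $\omega$-chains (the hypotheses of that proposition ask exactly that each $\mathcal{C}(UA,-)$ preserve the relevant filtered colimit, which is the meaning of $\omega$-smallness). This places us squarely in case~(a) of Theorem~\ref{thm:chains2}, so the only genuinely new content is to re-examine its proof constructively. The advantage of $\alpha = \omega$ is that the case distinction over ordinals in Proposition~\ref{prop:free2} now only involves the ordinals $0$, the successors $n+1$ for finite $n$, and the single limit ordinal $\omega$; for these the predicates ``is zero / successor / limit'' are decidable, so the construction by cases raised in item~(1) presents no constructive obstruction. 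Explicitly, one builds the chain $f^\star_0 = f$, $f^\star_1 = T_1 f$, and at each finite successor stage forms the joint coequaliser of Proposition~\ref{prop:free2} defining $f^\star_{n+2}$; this is a finite, decidable recursion and hence constructively legitimate.

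Next I would establish stabilisation at $\omega$. The remark preceding Proposition~\ref{prop:stablefree2} already observes that if $T_1$ preserves the colimit $X^\star_\omega = \colim_{n<\omega} X^\star_n$, then $X^\star$ stabilises; combined with the $\omega$-preservation of $T_1$ just established, stabilisation at $\omega$ is immediate and avoids entirely the clever Koubek--Reiterman lemma of item~(2), which was only needed to handle case~(b) with its factorisation-system bookkeeping. Concretely, preservation of the colimit means the connecting map $j^\omega_\omega \colon X^\star_\omega \to X^\star_{\omega+1}$ is invertible (the two forks defining $X^\star_{\omega+1}$ already agree after the colimit is taken), and then all later connecting maps are invertible as well. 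By Proposition~\ref{prop:stablefree2} the resulting algebra structure $\beta = (j^\omega_\omega)^{-1}\circ x_\omega$ on $X^\star_\omega$ is a \emph{special} $T_1$-algebra and is the required reflection along $\Delta$, hence the free special $T_1$-algebra on $f$.

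Finally I would assemble these into the adjunction and conclude. The free special $T_1$-algebras so constructed give the left adjoint to $U \colon \Toalg^s \to \mathcal{C}^{\2b}$, and running the isomorphism $(\mathbb{J}^\dpf)_1 \cong \Toalg^s$ of Proposition~\ref{lifting_alg_double} backwards together with Theorem~\ref{thm:doublecof} yields the desired awfs cofibrantly generated by $\mathbb{J}$. The step I expect to require the most care is the explicit verification that stabilisation at $\omega$ is genuinely constructive: one must check that the argument of Proposition~\ref{prop:stablefree2}, which proves the stabilised algebra is special, does not covertly invoke classical case analysis or the deferred appendix lemma, and that the colimit-preservation argument for stabilisation really does go through with only $\omega$-smallness and decidable case distinctions. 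Everything else is a direct specialisation of results proved earlier in the paper.
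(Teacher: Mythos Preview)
Your proposal is correct and follows essentially the same strategy as the paper: specialise Theorem~\ref{thm:main} to $\alpha=\omega$, note that $T_1,T_2$ are finitary by Proposition~\ref{prop:T_finitary}, and observe that the decidability problems with ordinal case-splits disappear for small ordinals so that the chain of Proposition~\ref{prop:free2} can be built and shown to stabilise at $\omega$ constructively.  One small refinement worth incorporating from the paper: since verifying stabilisation requires constructing $X^\star_{\omega+1}, X^\star_{\omega+2},\ldots$ and checking their connecting maps are invertible, the paper is explicit that the chain should be taken of length $\omega+\omega$ (where zero/successor/limit is still decidable), rather than just ``up to $\omega$'' as you phrase it.
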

\begin{proof}
    The result that we need in this case is that for functors $T_1$ and $T_2$ that preserve $\omega$-filtered limits (``are finitary'') the special free algebra exists. If we restrict our attention to chains of length $\omega + \omega$, then the formula for the free algebraic chain given in Proposition \ref{prop:free2} is constructively acceptable, as for ordinals less than $\omega + \omega$, the order is decidable as is the question whether an ordinal is zero, a successor or the limit $\omega$. Since both $T_1$ and $T_2$ preserve $\omega$-filtered colimits, these chains stabilise at the ordinal $\omega$ at which point they yield the free special algebra. For further details about the finitary case we refer to \cite{Seip2024}. 
\end{proof}

\begin{corr} \label{effKanfibrawfs} {\rm (Constructive)}
    The effective Kan fibrations as introduced in \cite{BergFaber2022} form the right class in an awfs.
\end{corr}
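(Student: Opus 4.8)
The plan is to obtain this as a direct application of Theorem~\ref{thm:constructivemain}, taking $\mathcal C = \sSet$. By the isomorphism $\RAlg \cong \mathbb J^\dpf$ recalled in Section~\ref{sect:prelim}, it suffices to exhibit a small double category $\mathbb J$ together with a double functor $U \colon \mathbb J \to \Sq(\sSet)$ whose associated category of right maps $\mathbb J^\dpf_1$ is, over $\sSet^{\2b}$, precisely the category of effective Kan fibrations and their morphisms. Granting this, Theorem~\ref{thm:constructivemain} produces the awfs cofibrantly generated by $\mathbb J$, whose right class is then exactly the effective Kan fibrations, as required.

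First I would record the standing hypotheses: $\sSet$ is cocomplete and locally small, so the conditions on $\mathcal C$ in Theorem~\ref{thm:constructivemain} hold automatically. The substantive step is to match the definition of effective Kan fibration from \cite{BergFaber2022} with a double-categorical lifting structure. An effective Kan fibration is a map equipped with a chosen family of fillers against the horn inclusions $\Lambda^n_k \hookrightarrow \Delta^n$ that is compatible both \emph{horizontally} (naturally in maps between horns) and \emph{vertically} (so that the filler chosen for a composite/iterated horn is determined by the fillers for its constituent horns). These are precisely the two coherence conditions imposed by the right lifting operation $(-)^\dpf$ in \eqref{eq:horizontal} and \eqref{eq:vertical}. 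I would therefore take $\mathbb J$ to be the small double category of \cite{BergFaber2022} whose vertical arrows encode the horn inclusions, whose horizontal arrows and squares encode morphisms between them, and whose vertical composition encodes the way horns compose; by construction $\mathbb J^\dpf_1$ is then the category of effective Kan fibrations over $\sSet^{\2b}$.

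Next I would verify the single remaining hypothesis of Theorem~\ref{thm:constructivemain}, namely that each $UA$ is $\omega$-small. Every object occurring in the generating data is a finite simplicial set (a horn, a simplex, or a finite colimit thereof), and such objects are finitely presentable, so $\sSet(UA,-)$ preserves $\omega$-filtered colimits; this is exactly $\omega$-smallness. With this in hand, Theorem~\ref{thm:constructivemain} applies and yields the desired awfs constructively.

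The main obstacle I anticipate is not the smallness bookkeeping but the faithful translation in the second step: one must check that the explicit horizontal and vertical filler-coherence data defining effective Kan fibrations in \cite{BergFaber2022} coincides \emph{on the nose} with $\mathbb J^\dpf_1$ for the correct choice of $\mathbb J$. Getting the vertical composition in $\mathbb J$ to reproduce exactly the compatibility of fillers for composite horns — rather than merely up to isomorphism — is the delicate point, and it is here that the double-categorical (as opposed to merely categorical) generation of Bourke and Garner is genuinely needed.
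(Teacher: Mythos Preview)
Your overall strategy matches the paper's: apply Theorem~\ref{thm:constructivemain} to a suitable small double functor $U\colon\mathbb J\to\Sq(\sSet)$ whose right class is the effective Kan fibrations, after checking that each $UA$ is $\omega$-small. Where your proposal diverges from the paper is in the identification of $\mathbb J$ and in the description of effective Kan fibrations themselves.

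Effective Kan fibrations in the sense of \cite{BergFaber2022} are \emph{not} maps equipped with coherent fillers against the horn inclusions $\Lambda^n_k\hookrightarrow\Delta^n$; that description is closer to the algebraic Kan fibrations, which are already handled by the single-categorical small object argument and do not require the double-categorical machinery. The effective notion involves a more refined family of generating left maps and compatibilities, and the identification of these with a double-categorical lifting structure is not in \cite{BergFaber2022}. The paper instead cites \cite{BergGeerligs25}, where it is shown that the effective Kan fibrations are precisely $\mathbb J^\dpf_1$ for a specific small double category whose objects $UA$ are \emph{decidable sieves}. The $\omega$-smallness step is then handled by \cite[Lemma~8.1]{BergFaber2022}, which shows that each decidable sieve is generated by a finite set of monomorphisms, hence is a finite colimit of representables and therefore $\omega$-small.

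So the obstacle you yourself flagged---pinning down $\mathbb J$ so that $\mathbb J^\dpf_1$ matches the effective Kan fibrations on the nose---is real, and it cannot be resolved by reasoning about horns; it requires the external results of \cite{BergGeerligs25} and \cite{BergFaber2022}. Once those are invoked, the proof is as short as the paper's.
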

\begin{proof}
    In \cite{BergGeerligs25} it is shown that the effective Kan fibrations from \cite{BergFaber2022} are precisely those maps which have the right lifting property against a small double category $U : \mathbb J \to \Sq(\mathcal{\bf sSets})$ such that each $UA$ is a decidable sieve \cite[Definition 4.5]{BergGeerligs25}. By \cite[Lemma 8.1]{BergFaber2022} such decidable sieves are generated by a finite set of mononorphisms, which implies that they are a finite colimit of representables and hence $\omega$-small. Therefore for each object $A$ of $\mathbb{J}$ the  object $UA$ is $\omega$-small and the result follows from Theorem \ref{thm:constructivemain}.
\end{proof}

%
\printbibliography

\newpage 

\appendix
\markboth{}{}
\section{Deferred proofs about the stabilisation of chains}

\begin{proof}[Proof of Theorem~\ref{thm:chains}]
\emph{(1) We start by showing that under either assumption (a) or (b), there exists a limit ordinal $\alpha$ such that $T$ preserves the colimit of $(X^{\bullet}_n)_{n < \alpha}$}. 
Indeed, assuming (b), the lemma in Section 8.5 of Koubek and Reiterman \cite{Koubek1979Categorical} proves that if $A$ is any chain, then there exists a limit ordinal $\alpha$ such that $T$ preserves the colimit of $(A_{n})_{n < \alpha}$.  (This is also proved in Proposition 4.2 of \cite{Kelly1980A-unified}.)  Therefore, without loss of generality, we can assume in either case, that there exists a limit ordinal $\alpha$ such that $T$ preserves the colimit of $(X^{\bullet}_n)_{n < \alpha}$.  Assuming this, we proceed to show that $X^{\bullet}$ stabilises at $\alpha$.

\emph{(2) We start by proving that $j_\alpha^{\alpha +1}$ is invertible}. To this end, firstly note that the maps $x_n : TX^\bullet_n \to X^\bullet_{n+1}$ for $n < \alpha$ form a morphism between chains of length $\alpha$ and so induce a unique morphism between their colimits $x'_{\alpha} \colon TX^{\bullet}_\alpha \to X^{\bullet}_\alpha$ such that 
\begin{equation} \label{eqn:a} x'_{\alpha} \circ Tj^\alpha_n = j^\alpha_{n+1} \circ x_n.\end{equation}
From this, the equality \begin{equation} \label{eqn:b} x'_{\alpha} \circ \eta_{X^\bullet_\alpha} = 1_{X^\bullet_\alpha} \end{equation} is easily deduced --- indeed, precomposing with the colimit inclusions we have
\begin{align*} x'_\alpha \circ \eta_{X^\bullet_\alpha} \circ j_n^\alpha  & = x'_\alpha \circ T(j_n^\alpha) \circ \eta_{X^\bullet_n} & \textrm{(by naturality of $\eta$)}\\
    & = j_{n+1}^\alpha \circ x_n \circ \eta_{X^\bullet_n} & \textrm{(by Equation \ref{eqn:a})} \\ & = j_{n+1}^\alpha \circ j_{n}^{n+1} & \textrm{(by definition of an algebraic chain)} \\
    & = j_n^\alpha. & \textrm{(by functoriality of $j$)}
\end{align*}
Similarly, the triangle
\[
    \xymatrix{
    TX^\bullet_\alpha \ar[rd]_{x_\alpha} \ar[r]^{x'_\alpha} & X^\bullet_\alpha \ar@{-->}[d]^{j_\alpha^{\alpha+1}} \\
    & X^\bullet_{\alpha+1} 
    }
 \]
 commutes, since precomposing with the colimit inclusions, we have  
\begin{align*}
    j_\alpha^{\alpha +1} \circ x'_\alpha \circ Tj^\alpha_n & = j_\alpha^{\alpha +1} \circ j^\alpha_{n+1} \circ x_n & \textrm{(by Equation \ref{eqn:a})}\\  
    & = j^{\alpha+1}_{n+1} \circ x_n & \textrm{(by functoriality of $j$)}\\
    & = x_\alpha \circ Tj^\alpha_n. & \textrm{(by definition of an algebraic chain)}
\end{align*}
Since  $x_\alpha$ is defined as a coequaliser map, the commutativity of the above triangle ensures that $j_\alpha^{\alpha+1}$ is invertible if and only if the diagram on the top row below is a coequaliser diagram.
    \[
    \xymatrix{
    \colim_{n < \alpha} TX^\bullet_n \ar@<1ex>[rr]^(.6){\langle Tj_n^\alpha \rangle} \ar@<-1ex>[rr]_(.6){\langle \eta_{X^\bullet_\alpha} \circ j_{n+1}^\alpha \circ x_n \rangle} & & TX^\bullet_{\alpha} \ar[rd]_{h} \ar[r]^{x'_\alpha} & X^\bullet_{\alpha} \ar@{.>}[d]^{\exists! \varphi} \\
    & & & C
    }
    \]
            
To show that it is a fork, we compute for $n < \alpha$: 
    \begin{align*}
        x'_\alpha \circ Tj_n^\alpha \justeq{(\ref{eqn:a})} j_{n+1}^\alpha \circ x_n \justeq{(\ref{eqn:b})} x'_\alpha \circ \eta_{X^\bullet_\alpha} \circ j_{n+1}^\alpha \circ x_n. 
    \end{align*}

To verify the universal property, assume that we have a fork $h : TX^\bullet_\alpha \to C$ as depicted above.  We must construct a morphism $\varphi \colon X^\bullet_{\alpha} \to C$ making the diagram commute.  (Its uniqueness then follows automatically since $x'_{\alpha}$ is a split epi.)
    
    To define $\varphi$, note that we have a cocone $(h \circ Tj_n^\alpha \circ \eta_{X^\bullet_n})_{n \geq 1}$ on $(X^\bullet_n)_{n < \alpha}$ with vertex $C$, as witnessed by the following commutative diagram
    \[
    \xymatrix{
    X^\bullet_k \ar[d]_{j_k^n} \ar[r]^{\eta_{X^\bullet_k}} & TX^\bullet_k \ar[d]_{Tj_k^n} \ar[r]^{Tj_k^\alpha} & TX^\bullet_\alpha \ar[d]_1 \ar[r]^{h} & C \ar[d]^1 \\
    X^\bullet_n \ar[r]_{\eta_{X^\bullet_n}} & TX^\bullet_n \ar[r]_{Tj_n^\alpha} & TX^\bullet_\alpha \ar[r]_h & C.
    }
    \]
    Thus we get a unique map $\varphi : X^\bullet_\alpha \to C$ with the property that for all $n < \alpha$, 
    \begin{equation} \label{eqn:c}
    \varphi \circ j_n^\alpha = h \circ Tj_n^\alpha \circ \eta_{X^\bullet_n}. 
    \end{equation}
    Our goal is to show that $\varphi \circ x'_\alpha = h$ but let us firstly show that \begin{equation} \label{eqn:d} h \circ \eta_{X^\bullet_\alpha} = \varphi. \end{equation}  Indeed, upon precomposition with the colimit inclusions and using that $\eta$ is natural, we have
    \[
    h \circ \eta_{X^\bullet_\alpha} \circ j_n^\alpha = h \circ Tj_n^\alpha \circ \eta_{X^\bullet_n} \justeq{(\ref{eqn:c})} \varphi \circ j_n^\alpha
    \]
   as required. Then to see that $\varphi \circ x'_\alpha = h$ we again precompose with colimit inclusions, computing: 
$$ h \circ Tj_n^\alpha =    h \circ \eta_{X^\bullet_\alpha} \circ j_{n+1}^\alpha \circ x_n \justeq{(\ref{eqn:d})}   \varphi \circ j_{n+1}^\alpha \circ x_n \justeq{(\ref{eqn:a})}  \varphi \circ x'_\alpha \circ Tj_n^\alpha$$
   where in the first equality we use that $h$ is a fork.  
   
\emph{(3) Next, we prove that if $j_{n}^{n+1}$ is invertible, so is $j_{n+1}^{n+2}$}.  Note that the triangle in the diagram below commutes

    \[
    \xymatrix{
    TX^\bullet_n \ar@<1ex>[rr]^{Tx_n \circ T\eta_{X^\bullet_n}} 
    \ar@<-1ex>[rr]_{Tx_n \circ \eta_{TX^\bullet_n}} & & TX^\bullet_{n+1} \ar[rrd]_{x_{n+1}} \ar[rr]^{x_n \circ T(j_n^{n+1})^{-1}} & & X^\bullet_{n+1} \ar@{-->}[d]^{j_{n+1}^{n+2}} \\
    & & & & X^\bullet_{n+2}
    }\]
  since $j_{n+1}^{n+2} \circ x_n = x_{n+1} \circ T(j_n^{n+1})$ by the definition of an algebraic chain.  Since the lower fork is, by definition, a coequaliser, $j_{n+1}^{n+2}$ will be invertible just when the top row above is a coequaliser diagram. 
Firstly we show that it is a fork. We calculate, on the one hand:

\begin{align*} x_n \circ T(j_n^{n+1})^{-1} \circ Tx_n \circ T\eta_{X^\bullet_n} & = x_n \circ T(j_n^{n+1})^{-1} \circ T(j_n^{n+1}) & \textrm{(by definition of algebraic chain)} \\
    & = x_n  & \textrm{(by cancelling inverses)}  \end{align*}
whilst on the other:
\begin{align*} x_n \circ T(j_n^{n+1})^{-1} \circ Tx_n \circ \eta_{TX^\bullet_n} & = x_n \circ \eta_{X^\bullet_n} \circ (j_n^{n+1})^{-1} \circ x_n & \textrm{(by naturality of $\eta$)}\\
    & =  j_n^{n+1} \circ (j_n^{n+1})^{-1} \circ x_n & \textrm{(by definition of algebraic chain)}\\
    & = x_n & \textrm{(by cancelling inverses)} \end{align*}

In fact, we have a split coequaliser.
  \[
    \xymatrix{
    TX^\bullet_n \ar@<2ex>[rr]^{Tx_n \circ T\eta_{X^\bullet_n}} 
    \ar@<-1ex>[rr]^{Tx_n \circ \eta_{TX^\bullet_n}} & & TX^\bullet_{n+1}  \ar@<3ex>[ll]^{T(j_n^{n+1})^{-1}}  \ar[rr]^{x_n \circ T(j_n^{n+1})^{-1}} & & X^\bullet_{n+1}   \ar@<2ex>[ll]^{\eta_{TX^\bullet_{n+1}}} 
    }\]
The three required equations are $$x_n \circ T(j_n^{n+1})^{-1} \circ \eta_{TX^\bullet_{n+1}} = x_n \circ \eta_{X^\bullet_{n+1}} \circ (j_n^{n+1})^{-1} = {j_n^{n+1}} \circ (j_n^{n+1})^{-1} = 1_{X^\bullet_{n+1}}, $$

$$\eta_{X^\bullet_{n+1}} \circ x_n \circ T(j_n^{n+1})^{-1}  = Tx_n \circ \eta_{TX^\bullet_{n}}  \circ T(j_n^{n+1})^{-1}$$
and 
$$Tx_n \circ T\eta_{X^\bullet_{n}} \circ T(j_n^{n+1})^{-1} = T(j_n^{n+1}) \circ T(j_n^{n+1})^{-1} = 1_{TX^\bullet_{n+1}},$$
where in each of the steps we have used either the naturality of $\eta$ or the definition of an algebraic chain.

\emph{(4) Finally we prove that $j^\beta_{\alpha}$ is invertible for all $\alpha \leq \beta.$}  We do so by transfinite induction.  The base case is established in (1) above so it remains to prove the inductive step.

Firstly, if $\beta$ is a limit ordinal, then suppose that $j_{\alpha}^{n}$ is invertible for all $\alpha \leq n < \beta$.  Then since $X^{\bullet}_{\beta} = \colim_{n < \beta} X^{\bullet}_{n} = \colim_{\alpha \leq n < \beta} X^{\bullet}_n$, it is the colimit of a chain of isomorphisms, and hence each colimit coprojection $j^{\beta}_{n}$ is invertible, as required.

Secondly, suppose $\beta$ is a successor.  The case in which $\beta = \gamma+2$ follows from (2) above using the inductive hypothesis.  Thus, we are left to consider the case $\beta = \gamma + 1$ for $\gamma$ a limit ordinal, and using the inductive hypothesis, and that $j^\gamma_\alpha$ is invertible, this amounts to showing that $j_{\gamma}^{\gamma+1}$ is invertible.  

Choose then $\alpha \leq n < \gamma$ so that, by assumption, $j^{\gamma}_{n}$ is invertible.  Then  the triangle in the diagram below commutes by the definition of an algebraic chain.  Since the lower fork is a coequaliser, it suffices to show that the upper row is a coequaliser too.

    \[
    \xymatrix{
    \colim_{n < \gamma} TX^\bullet_n \ar@<1ex>[rr]^(.6){\langle Tj_n^\gamma \rangle} \ar@<-1ex>[rr]_(.6){\langle \eta_{X^\bullet_\gamma} \circ j_{n+1}^\gamma \circ x_n \rangle} & & TX^\bullet_{\gamma} \ar[rrrd]_{x_{\gamma}} \ar[r]^(.6){T(j^{\gamma}_{n})^{-1}} & TX^\bullet_{n} \ar[r]^{x_n} & X^\bullet_{n+1} \ar[r]^{j^{\gamma}_{n+1}} & X^\bullet_{\gamma } \ar@{.>}[d]^{ j_{\gamma}^{\gamma+1}} \\
& &  & &    & X^\bullet_{\gamma+1}
    }
    \]
We leave the straightforward verification that it is a fork to the reader.   In fact it is a split fork, with splitting given by the pair $\eta_{X^\bullet_{\gamma }} \colon X^\bullet_{\gamma } \to TX^\bullet_{\gamma }$ and $\iota_n \circ T(j^{\gamma}_{n})^{-1} \colon TX^\bullet_{\gamma } \to TX^\bullet_n \to \colim_{n < \gamma} TX^\bullet_n$ where $\iota_n$ is the colimit inclusion.

The three split coequaliser equations are below.  Firstly
 $$\langle T(j^{\gamma}_{n}) \rangle \circ \iota_n \circ T(j^{\gamma}_{n})^{-1} =  T(j^{\gamma}_{n}) \circ T(j^{\gamma}_{n})^{-1}  = 1$$ where the first equality is by definition.  Secondly, we have $$\langle \eta_{X^\bullet_\gamma} \circ j_{n+1}^\gamma \circ x_n \rangle \circ  \iota_n \circ T(j^{\gamma}_{n})^{-1} = \eta_{X^\bullet_\gamma} \circ j_{n+1}^\gamma \circ x_n \circ T(j^{\gamma}_{n})^{-1}$$
again by definition.  Lastly, 
\begin{align*} j_{n+1}^\gamma \circ x_n \circ T(j^{\gamma}_{n})^{-1} \circ \eta_{X^\bullet_\gamma} & = j_{n+1}^\gamma \circ x_n \circ \eta_{X^\bullet_n} \circ (j^{\gamma}_{n})^{-1} & \textrm{(by naturality of $\eta$)}\\
    & = j_{n+1}^\gamma \circ j_n^{n+1} \circ (j^{\gamma}_{n})^{-1} & \textrm{(by definition of algebraic chains)}\\
    & = j^{\gamma}_{n} \circ (j^{\gamma}_{n})^{-1} = 1. & \textrm{(by functoriality of $j$)}\end{align*}
\end{proof}

\begin{proof}[Proof of Theorem \ref{thm:chains2}]
The proof closely follows that of Theorem \ref{thm:chains} with only a few small adaptations.  Let $X \in \mathcal C$.
\emph{(1) We start by showing that under either assumption, there exists a limit ordinal $\alpha$ such that $T_1$ and $T_2$ preserve the colimit of $(X^{\bullet}_n)_{n < \alpha}$}.  Indeed, suppose (b) and consider $T_1 \times T_2 \colon \mathcal C^2 \to  \mathcal C^2$.  The pointwise $(E,M)$ factorisation system on $ \mathcal C^2$ is clearly proper, since pointwise epis (resp. monos) are epi (resp. mono) and co-well-powered, since the set of pointwise $E$-quotients of an object in the product category is simply the product of the sets of $E$-quotients in the original category.  
Now $T_1 \times T_2$ then preserves colimits of pointwise $M$-chains of length $\alpha$, and so, as in Step 1 of Proposition \ref{thm:chains}, there exists a limit ordinal $\beta$ such that $T_1 \times T_2$ preserves the colimit of the $\beta$-chain $((X^{\star}_n),(X^{\star}_n)) _{n < \beta}$ in $ \mathcal C^2$.  But since colimits are pointwise in the product, therefore both $T_1$ and $T_2$ preserve the colimit $(X^{\star}_n)_{n < \beta}$.  Therefore, without loss of generality, we can assume in either case, that there exists a limit ordinal $\alpha$ such that both $T_1$ and $T_2$ preserve the colimit of $(X^{\star}_n)_{n < \alpha}$, in which case we will show that $X^{\star}$ stabilises at $\alpha$.

\emph{(1) We show that $j_\alpha^{\alpha +1}$ and $j_{\alpha+1}^{\alpha +2}$ are invertible.} The proof that $j_\alpha^{\alpha +1}$ is invertible is identical to that in Step 2 of Proposition \ref{thm:chains}, since the formulae for the coequalisers defining $X^\star$ and $X^\bullet$ coincide at a limit ordinal.  In order to prove that $j_{\alpha+1}^{\alpha +2}$ is invertible, observe that the triangle in the diagram below commutes.
\begin{equation}\label{eq:doublefork}
\xymatrix{
 T_1X^\star_\alpha   \ar@<1ex>[drr]^{T_1x_\alpha \circ T_1\eta_{X^\star_\alpha}}   \ar@<-1ex>[drr]_{T_1x_\alpha \circ \eta_{T_1X^\star_\alpha}} \\
 & &  T_1X^\star_{\alpha+1} \ar[drr]_{x_{\alpha+1}} \ar[rr]^{x_\alpha \circ T(j_\alpha^{\alpha+1})^{-1}} && X^\star_{\alpha+1} \ar@{-->}[d]^{j_{\alpha+1}^{\alpha+2}}\\
  T_2X^\star_\alpha \ar@<1ex>[urr]^{T_1x_\alpha \circ \lambda_{X^\star_\alpha}} \ar@<-1ex>[urr]_{T_1(j_\alpha^{\alpha+1}) \circ \gamma_{X^\star_\alpha}} 
 & & & & X^{\star}_{\alpha+2} }
 \end{equation}
 Since $x_{\alpha+1}$ is by definition the joint coequaliser, $j_{\alpha+1}^{\alpha+2}$ will be invertible just when the horizontal morphism $x_\alpha \circ T(j_\alpha^{\alpha+1})^{-1} \colon  T_1X^\star_{\alpha+1} \to X^\star_{\alpha+1}$ is a joint coequaliser.  Now this morphism is the split coequaliser of the upper parallel pair by the same argument as in Step 3 of Proposition \ref{thm:chains} --- therefore, it suffices to show that it coequalises the lower parallel pair (since being a coequaliser of one fork plus a cocone over the whole diagram trivially implies being the joint coequaliser of the whole diagram).
 
Firstly, observe that the lower path $x_\alpha \circ T(j_\alpha^{\alpha+1})^{-1} \circ T_1(j_\alpha^{\alpha+1}) \circ \gamma_{X^\star_\alpha}$ equals $x_\alpha \circ \gamma_{X^\star_\alpha}$ on cancelling inverses.  Therefore, it suffices to show that the upper path also equals $x_\alpha \circ \gamma_{X^\star_\alpha}$, which we show by precomposing with the colimit inclusions $T_2j^\alpha_n \colon T_2 X^\star_n \to T_2X^\star_\alpha$.
  
  \begin{align*}
        x_\alpha \circ T(j_\alpha^{\alpha+1})^{-1} \circ T_1x_\alpha \circ \lambda_{X^\star_\alpha} \circ T_2j^\alpha_n   & =  & (\textrm{by naturality of } \lambda) \\
        x_\alpha \circ T(j_\alpha^{\alpha+1})^{-1} \circ T_1x_\alpha \circ  T_1T_1 j^\alpha_n \circ \lambda_{X^\star_n}  & = &  (\textrm{by definition of an algebraic chain}) \\ 
        x_\alpha \circ T(j_\alpha^{\alpha+1})^{-1} \circ T_1 j^{\alpha+1}_{n+1} \circ T_1 x_n  \circ \lambda_{X^\star_n}  & =   & (\textrm{by functoriality of chains and cancelling inverses})\\ 
        x_\alpha \circ Tj_{n+1}^{\alpha}  \circ T_1 x_n  \circ \lambda_{X^\star_n}    & =  & (\textrm{by definition of an algebraic chain}) \\
        j^{\alpha+1}_{n+2} \circ x_{n+1} \circ T_1 x_n  \circ \lambda_{X^\star_n} & = & (\textrm{by definition of special algebraic chains}) \\ 
         j^{\alpha+1}_{n+2} \circ x_{n+1}\circ T_1(j^{n+1}_{n}) \circ \gamma_{X^\star_n}  &= &  (\textrm{by definition of an algebraic chain}) \\
         x_{\alpha} \circ T_1(j_{n+1}^{\alpha}) \circ T_1(j^{n+1}_{n}) \circ \gamma_{X^\star_n}  & = & (\textrm{by functoriality of algebraic chain}) \\ 
         x_{\alpha} \circ T_1(j_{n}^{\alpha}) \circ \gamma_{X^\star_n} & = &  (\textrm{by naturality of } \gamma ) \\
         x_{\alpha} \circ  \gamma_{X^\star_\alpha} \circ T_{2}j_{n}^{\alpha} &
    \end{align*}
 
\emph{(2) Next, we prove that if $j_{n}^{n+1}$ and  $j_{n+1}^{n+2}$ are invertible, then so is $j_{n+2}^{n+3}$}. To see this, observe that the morphism $j_{n+2}^{n+3}$ is the induced morphism between joint coequalisers induced by the natural transformation of diagrams
\[\xymatrix{
 T_1X^\star_n \ar[rrrr]^{T_1 j_{n}^{n+1}}  \ar@<1ex>[drr]^{T_1x_n \circ T_1\eta_{X^\star_n}}   \ar@<-1ex>[drr]_(.40){T_1x_n \circ \eta_{T_1X^\star_n}} &&&&  T_1X^\star_{n+1}  \ar@<1ex>[drr]^{T_1x_{n+1} \circ T_1\eta_{X^\star_{n+1}}}   \ar@<-1ex>[drr]_(.35){T_1x_{n+1} \circ \eta_{T_1X^\star_{n+1}}}\\
 & &  T_1X^\star_{n+1} \ar[rrrr]^(.30){T_1j_{n+1}^{n+2}}  &&&& T_1X^\star_{n+2} \\
  T_2X^\star_n \ar@<1ex>[urr]^{T_1x_{n} \circ \lambda_{X^\star_{n}}} \ar@<-1ex>[urr]_{T_1(j_n^{n+1}) \circ \gamma_{X^\star_{n}}} \ar[rrrr]_{T_2 j_{n}^{n+1}}  &&&& T_2X^\star_{n+1} \ar@<1ex>[urr]^(.35){T_1x_{n+1} \circ \lambda_{X^\star_{n+1}}} \ar@<-1ex>[urr]_(.55){T_1(j_{n+1}^{n+2}) \circ \gamma_{X^\star_{n+1}}}
 & & & & }
  \]
 whose components $j_{n}^{n+1}$ and  $j_{n+1}^{n+2}$ are by assumption invertible.  As such, the induced morphism $j_{n+2}^{n+3}$ is itself invertible.

\emph{(3) Finally we prove that $j^\beta_{\alpha}$ is invertible for all $\alpha \leq \beta.$}  We do so by transfinite induction.  The base case is established in (1) above so it remains to prove the inductive step.

Firstly, the case that $\beta$ is a limit ordinal is exactly as in Step 4 of Proposition \ref{thm:chains}.  Secondly, suppose $\beta$ is a successor.  There are three cases that we need to consider.  The case of $\beta = \gamma+1$, for $\gamma$ a limit ordinal, is exactly as in Step 4 of Proposition \ref{thm:chains}, since then the coequalisers defining $X^{\star}$ and $X^{\bullet}$ have the same form.   Next, we turn to the case $\beta = \gamma+2$, for $\gamma$ a limit ordinal.  By induction, it suffices to prove that $j_{\gamma+1}^{\gamma +2}$ is invertible.  The proof of this is identical to the proof in Step 3 that $j_{\alpha+1}^{\alpha +2}$ is invertible, with $\gamma$ substituted for $\alpha$, with the exception that to prove that the ``upper path'' in \eqref{eq:doublefork} has the required form, it suffices to precompose with any $T_2j^\alpha_n \colon T_2 X^\star_n \to T_2 X^\star_\gamma$ for $\alpha \leq n < \gamma$, since all of these are invertible.  The ensuing calculation then applies unchanged.

Finally, the  case in which $\beta = \gamma+3$ follows from Step 3 above using the inductive hypothesis.
\end{proof}

\end{document}